\newtheorem{thm}{Theorem}[section] 
\newtheorem{claim}[thm]{Claim}
\newtheorem{cor}[thm]{Corollary}
\newtheorem{lem}[thm]{Lemma}
\newtheorem{prop}[thm]{Proposition}
\newtheorem{ques}[thm]{Question}
\theoremstyle{definition}
\newtheorem{rem}[thm]{Remark}
\newcommand\operA[2]{{\if!#2!\operatorname{#1}\else{\operatorname{#1}_{#2}^{\phantom{I}}}\fi}} 
\renewcommand{\H}{\operatorname H}
\def\norm{{\operatorname{Norm}}}
\newcommand{\Trace}[1][]{\if!#1!\operatorname{Tr}\else{\operatorname{Tr}_{#1}^{\phantom{I}}}\fi} 
\long\def\forget#1\forgotten{{}} %
\def\({\left(}
\def\){\right)}
\newcommand\LAY[3][]{{\begin{array}{c}\mbox{#2} \if#1!{}\else{+}\fi \\ \mbox{#3}\end{array}}}
\def\ps@pprintTitle{%
 \let\@oddhead\@empty
 \let\@evenhead\@empty
 \def\@oddfoot{}%
 \let\@evenfoot\@oddfoot}
\newcommand{\bigperp}{%
  \mathop{\mathpalette\bigp@rp\relax}%
  \displaylimits
}
\newcommand{\bigp@rp}[2]{%
  \vcenter{
    \m@th\hbox{\scalebox{\ifx#1\displaystyle2.1\else1.5\fi}{$#1\perp$}}
  }%
}
\renewcommand{\geq}{\geqslant}
\renewcommand{\leq}{\leqslant}
\newif\iffurther
\begin{document}

\title{Common Splitting Fields of Symbol Algebras}

\author{Adam Chapman}
\address{School of Computer Science, Academic College of Tel-Aviv-Yaffo, Rabenu Yeruham St., P.O.B 8401 Yaffo, 6818211, Israel}
\email{adam1chapman@yahoo.com}
\author{Mathieu Florence}
\address{Equipe de Topologie et G\'{e}om\'{e}trie Alg\'{e}briques, Institut de Math\'{e}matiques de Jussieu,
4 place Jussieu, 75005 Paris, France}
\email{mflorence@imj-prg.fr}
\author{Kelly McKinnie}
\address{Department of Mathematical Sciences, University of Montana, Missoula, MT 59812, USA}
\email{kelly.mckinnie@mso.umt.edu}

\begin{abstract}
We study the common splitting fields of symbol algebras of degree $p^m$ over fields $F$ of $\operatorname{char}(F)=p$.
We first show that if any finite number of such algebras share a degree $p^m$ simple purely inseparable splitting field, then they share a cyclic splitting field of the same degree.
As a consequence, we conclude that every finite number of symbol algebras of degrees $p^{m_0},\dots,p^{m_t}$ share a cyclic splitting field of degree $p^{m_0+\dots+m_t}$.
This generalization recovers the known fact that every tensor product of symbol algebras is a symbol algebra.
We apply a result of Tignol's to bound the symbol length of classes in $\operatorname{Br}_{p^m}(F)$ whose symbol length when embedded into $\operatorname{Br}_{p^{m+1}}(F)$ is 2 for $p\in \{2,3\}$.
We also study similar situations in other Kato-Milne cohomology groups, where the necessary norm conditions for splitting exist.
\end{abstract}

\keywords{
Kato-Milne Cohomology; Galois Cohomology; Symbol Length; Norm Forms; Common Splitting Fields; Linkage}
\subjclass[2020]{16K20 (primary); 13A35, 19D45, 20G10 (secondary)
}
\maketitle

\section{Introduction}

By classical results of Teichm\"{u}ller and Albert, the group $\operatorname{Br}_{p^m}(F)$ is generated by symbol algebras of degree $p^m$.
In \cite[Chapter 7]{Albert:1968}, Albert concludes that any central simple algebra of degree $p^m$ over a field $F$ of $\operatorname{char}(F)=p$ is Brauer equivalent to a symbol (cyclic) algebra of degree $p^t$ for some $t\geq m$. This is done by finding a common degree $p^{m+t}$ simple purely inseparable splitting field for any two symbol algebras of degrees $p^m$ and $p^t$, which implies that their tensor product is a symbol algebra of degree $p^{m+t}$.
We generalize this result by finding a common cyclic splitting field of degree $p^{m_0+\dots+m_t}$ for any finite number of such algebras of degrees $p^{m_0},\dots,p^{m_t}$. This is a result of the Theorem \ref{cyclic}, which states that any finite number of symbol algebras of the same degree sharing a simple purely inseparable maximal subfield also share a cyclic maximal subfield.

Tignol studied the opposite direction in \cite{Tignol:1983}, proving that a symbol algebra of degree $p^t$ and exponent $p^m$ is of symbol length at most $p^{t-m}$ in $\operatorname{Br}_{p^m}(F)$.
We apply this theorem in bounding the symbol length in $\operatorname{Br}_{p^m}(F)$ of classes that are Brauer equivalent to tensor products of two cyclic algebras of degree $p^{m+1}$  when $p$ is either 2 or 3, based on the known chain lemmas for cyclic algebras of degree 2 and 3, using a method introduced earlier in \cite{Matzri:2014}.
We then consider what happens for more general Kato-Milne cohomology groups in cases where the necessary norm conditions exist.

\section{Preliminaries}

Though our primary motivation and interest are in algebras and the Brauer group, the Kato-Milne cohomology groups give us the proper setting to prove our results as well as a way to generalize beyond the setting of algebras.
For a field $F$ of characteristic $p$, positive integer $m$ and nonnegative integer $n$, the Kato-Milne coholology group $\H_{p^m}^{n+1}(F)$ is the additive group $W_m(F) \otimes \underbrace{F^\times \otimes \dots \otimes F^\times}_{n \ \text{times}}$ modulo the relations
\begin{itemize}
\item $(\omega^p-\omega) \otimes b_1 \otimes \dots \otimes b_n=0$, 
\item $(0\dots0,a,0,\ldots,0) \otimes a \otimes b_2 \otimes \dots \otimes b_n=0$, and 
\item $\omega \otimes b_1 \otimes \dots \otimes b_n=0$ where $b_i=b_j$ for some $i \neq j$.
\end{itemize}
Here $W_m(F)$ is the ring of truncated Witt vectors of length $m$ over $F$, and for each $\omega=(\omega_1,\dots,\omega_m)$, $\omega^p$ stands for $(\omega_1^p,\dots,\omega_m^p)$. For a comprehensive reference on these groups see \cite{AravireJacobORyan:2018} or the more classical reference \cite{Izhboldin:2000}.
The generators $\omega \otimes b_1 \otimes \dots \otimes b_n$ are called ``($p^m$-)symbols". For $n=1$, these groups describe the $p^m$-torsion of the Brauer group, i.e., $\operatorname{H}_{p^m}^2(F) \cong \operatorname{Br}_{p^m}(F)$ with the isomorphism given by $\omega \otimes b \mapsto [\omega,b)_F$, where $[\omega,b)_F$ stands for the symbol algebra generated by $\theta_1,\dots,\theta_m$ and $y$ satisfying
$$\vec{\theta}^p-\vec{\theta}=\omega, \qquad y^{p^m}=b, \qquad \text{and} \qquad y\,\vec{\theta}\,y^{-1}=\vec{\theta}+\vec{1}$$
where $\vec{\theta}=(\theta_1,\theta_2,\dots,\theta_m)$ is a truncated Witt vector, $\vec{\theta}^p=(\theta_1^p,\theta_2^p,\dots,\theta_m^p)$, and $\vec{1}=(1,0,\dots,0)$ (see \cite{MammoneMerkurjev:1991} for reference). 
For simplicity, we denote by $F_\omega$ the degree $p^m$ cyclic subfield of $[\omega,\beta)_F$ generated by $\theta_1,\dots,\theta_m$.
The symbol length of a class in $\H_{p^m}^{n+1}(F)$ is the minimal $t$ for which the class can be written as the sum of $t$ symbols. 
For any $\ell\in \{1,\dots,m-1\}$, there is a shift map $\operatorname{Shift}_{m-\ell}^\ell$ from the group $\operatorname{H}_{p^{m-\ell}}^{n+1}(F)$ to $\operatorname{H}_{p^m}^{n+1}(F)$ given by
$(a_1,\dots,a_{m-\ell}) \otimes b_1 \otimes \dots \otimes b_n \mapsto (\underbrace{0,\dots,0}_{\ell \ \text{times}},a_1,\dots,a_{m-\ell}) \otimes b_1 \otimes \dots \otimes b_n$.
In the opposite direction, there is the map taking each $\pi \in \operatorname{H}_{p^m}^{n+1}(F)$ to $\underbrace{\pi + \dots + \pi}_{p^{\ell} \ \text{times}}$. In particular, it takes each symbol $(a_1,\dots,a_m) \otimes b_1 \otimes \dots \otimes b_n$ to $(0,\dots,0,a_1^{p^{\ell}},\dots,a_{m-\ell}^{p^{\ell}}) \otimes b_1 \otimes \dots \otimes b_n$ (which is equal to $(0,\dots,0,a_1,\dots,a_{m-\ell}) \otimes b_1 \otimes \dots \otimes b_n$ in this group). It gives rise to a homomorphism $\operatorname{Exp}_m^\ell : \H_{p^m}^{n+1}(F) \rightarrow \H_{p^{m-\ell}}^{n+1}(F)$ which maps every symbol $(a_1,\dots,a_m) \otimes b_1 \otimes \dots \otimes b_n$ to $(a_1,\dots,a_{m-\ell}) \otimes b_1 \otimes \dots \otimes b_n$.
It is clear from the formulas that $\operatorname{Exp}_m^\ell\circ \operatorname{Shift}^{m-t}_t=\operatorname{Exp}_t^\ell$ for any $t<m$.
\begin{thm}[{\cite[Theorem 2.31]{AravireJacobORyan:2018}; see also \cite[Theorem 1]{Izhboldin:2000} and \cite[Lemma 6.2]{Izhboldin:1996}}]\label{Exact}
The following sequence is exact
$$\xymatrix{
0\ar@{->}[r] & \H_{p^{m-1}}^{n+1}(F)\ar@{->}^{\operatorname{Shift}_{m-1}^1}[r] &  \H_{p^m}^{n+1}(F)\ar@{->}[r]^{\operatorname{Exp}_m^{m-1}} &  \H_{p}^{n+1}(F)\ar@{->}[r] & 0.}$$
\end{thm}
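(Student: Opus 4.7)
The easy halves of the statement are formal. For surjectivity of $\operatorname{Exp}_m^{m-1}$, every generator $(a)\otimes b_1 \otimes \dots \otimes b_n$ of $\H_p^{n+1}(F)$ is the image of the lift $(a,0,\dots,0)\otimes b_1 \otimes \dots \otimes b_n$, and since symbols generate the target the surjectivity follows. For $\operatorname{Exp}_m^{m-1}\circ \operatorname{Shift}_{m-1}^1=0$ one may invoke the identity $\operatorname{Exp}_m^\ell\circ \operatorname{Shift}^{m-t}_t=\operatorname{Exp}_t^\ell$ from the preceding paragraph with $t=\ell=m-1$ (noting that $\H_{p^0}^{n+1}(F)=0$), or simply observe that $\operatorname{Exp}_m^{m-1}$ sends the shifted symbol $(0,a_1,\dots,a_{m-1})\otimes \vec b$ to $(0)\otimes \vec b$, which vanishes by the Artin--Schreier-type defining relation with $\omega=0$.

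The substantive content, namely injectivity of $\operatorname{Shift}_{m-1}^1$ together with exactness in the middle, I would reduce to a classical exact sequence for the logarithmic de Rham--Witt complex. Via the Bloch--Kato--Gabber--Illusie dictionary, $\H_{p^m}^{n+1}(F)$ is canonically identified with $W_m\Omega^n_{F,\log}$, the symbol $(a_1,\dots,a_m)\otimes b_1\otimes \dots \otimes b_n$ going to the logarithmic class built from the Witt vector $(a_1,\dots,a_m)$ and the forms $d\log b_i$. Under this identification $\operatorname{Shift}_{m-1}^1$ corresponds to the Verschiebung $V$ and $\operatorname{Exp}_m^{m-1}$ corresponds to the $(m-1)$-fold restriction $R^{m-1}$, so the sequence in the theorem becomes the standard
\[0 \to W_{m-1}\Omega^n_{F,\log} \xrightarrow{V} W_m\Omega^n_{F,\log} \xrightarrow{R^{m-1}} \Omega^n_{F,\log} \to 0,\]
which is the logarithmic shadow of the elementary Witt-vector sequence $0 \to W_{m-1}(F) \xrightarrow{V} W_m(F) \xrightarrow{R^{m-1}} F \to 0$.

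The delicate part, and the step I would handle with the most care, is verifying the dictionary between the Kato--Milne symbol presentation and the logarithmic de Rham--Witt presentation: one must check that each of the three families of defining relations on the symbol side translates into an identity already holding in $W_m\Omega^n_{F,\log}$, and that these relations suffice to present the whole group. For a hands-on alternative, the same result can be obtained by induction on $m$ using a snake-lemma argument based on the short exact sequence $0 \to W_{m-1}(F) \xrightarrow{V} W_m(F) \xrightarrow{R^{m-1}} F \to 0$ of Witt vectors; the bottleneck there is to show that any combination of defining relations witnessing $\operatorname{Shift}_{m-1}^1(x)=0$ can be rewritten so as to lie entirely in the image of $V\otimes \mathrm{id}$ after tensoring with $(F^\times)^{\otimes n}$, exploiting that Verschiebung is additive and commutes with Frobenius ($V(\omega)^p = V(\omega^p)$), so that Artin--Schreier-type relations pull back cleanly through the shift.
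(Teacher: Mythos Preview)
The paper does not supply a proof of this theorem: it is quoted from \cite{AravireJacobORyan:2018}, \cite{Izhboldin:2000} and \cite{Izhboldin:1996} without argument, so there is no in-paper proof to compare against. Your formal observations on surjectivity and on the vanishing of the composite are fine. The substantive part of your sketch, however, rests on an incorrect identification.

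The group $\H_{p^m}^{n+1}(F)$ is \emph{not} isomorphic to $W_m\Omega^n_{F,\log}$. The latter is generated by products $d\log[b_1]\cdots d\log[b_n]$ of Teichm\"uller logarithmic differentials and, by Bloch--Gabber--Kato, is canonically $K_n^M(F)/p^m$; there is no Witt-vector slot in which to place your $\omega$. Already for $n=0$ one sees the mismatch: $W_m\Omega^0_{F,\log}$ is the group of Frobenius-fixed Witt vectors, i.e.\ $\mathbb{Z}/p^m\mathbb{Z}$, whereas $\H_{p^m}^1(F)=W_m(F)/\wp(W_m(F))$. The correct de Rham--Witt description sends $\omega\otimes b_1\otimes\cdots\otimes b_n$ to the class of $\omega\cdot d\log[b_1]\cdots d\log[b_n]$ in a suitable \emph{quotient} of $W_m\Omega^n_F$ (essentially the cokernel of $F-1$, equivalently $H^1_{\text{\'et}}$ of the log sheaf), not in its kernel. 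Under that correct identification, $\operatorname{Shift}$ and $\operatorname{Exp}$ do correspond to $V$ and $R^{m-1}$ on the Witt side, and exactness is then deduced by d\'evissage in the de Rham--Witt complex---but this passage through cokernels is exactly where the nontrivial work in the cited references lies, and it is not the ``logarithmic shadow'' of the elementary Witt-vector sequence you invoke. Your snake-lemma alternative is gestured at but not carried out; the bottleneck you name (rewriting relations to lie in the image of $V\otimes\mathrm{id}$) is real and is not resolved by the remark that $V$ is additive and commutes with Frobenius.
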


\begin{cor}[{\cite[Remark 2.32]{AravireJacobORyan:2018}}]
The following sequence is also exact
$$\xymatrix{
0\ar@{->}[r] & \H_{p^{m-\ell}}^{n+1}(F)\ar@{->}^{\operatorname{Shift}_{m-\ell}^\ell}[r] &  \H_{p^m}^{n+1}(F)\ar@{->}[r]^{\operatorname{Exp}_m^{m-\ell}} &  \H_{p^\ell}^{n+1}(F)\ar@{->}[r] & 0.}$$
\end{cor}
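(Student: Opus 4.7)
The plan is to induct on $\ell$, keeping $m \geq \ell$ arbitrary, with the base case $\ell = 1$ being Theorem \ref{Exact} itself. For the inductive step, assuming the corollary holds with $\ell - 1$ in place of $\ell$, I realize the claimed sequence as the middle row of the commutative diagram
$$\xymatrix{
0 \ar[r] & \H_{p^{m-\ell}}^{n+1}(F) \ar@{=}[d] \ar[r]^{\operatorname{Shift}_{m-\ell}^{\ell-1}} & \H_{p^{m-1}}^{n+1}(F) \ar[d]^{\operatorname{Shift}_{m-1}^1} \ar[r]^{\operatorname{Exp}_{m-1}^{m-\ell}} & \H_{p^{\ell-1}}^{n+1}(F) \ar[d]^{\operatorname{Shift}_{\ell-1}^1} \ar[r] & 0 \\
0 \ar[r] & \H_{p^{m-\ell}}^{n+1}(F) \ar[d] \ar[r]^{\operatorname{Shift}_{m-\ell}^{\ell}} & \H_{p^{m}}^{n+1}(F) \ar[d]^{\operatorname{Exp}_{m}^{m-1}} \ar[r]^{\operatorname{Exp}_{m}^{m-\ell}} & \H_{p^{\ell}}^{n+1}(F) \ar[d]^{\operatorname{Exp}_{\ell}^{\ell-1}} \ar[r] & 0 \\
 & 0 \ar[r] & \H_{p}^{n+1}(F) \ar@{=}[r] & \H_{p}^{n+1}(F) &
}$$
whose top row is the corollary's sequence at level $m - 1$ for $\ell - 1$ (exact by the inductive hypothesis), whose bottom row is the trivially exact identity sequence on $\H_p^{n+1}(F)$, whose middle and right columns are instances of Theorem \ref{Exact} at levels $m$ and $\ell$ respectively, and whose left column is the trivial identity sequence on $\H_{p^{m-\ell}}^{n+1}(F)$.

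Commutativity of the four inner squares is immediate on a generator $(a_1,\dots,a_k) \otimes b_1 \otimes \dots \otimes b_n$, using only that $\operatorname{Shift}$ prepends zeros to a Witt vector and $\operatorname{Exp}$ truncates its tail; the four identities to be checked are $\operatorname{Shift}_{m-1}^1 \circ \operatorname{Shift}_{m-\ell}^{\ell-1} = \operatorname{Shift}_{m-\ell}^\ell$, $\operatorname{Shift}_{\ell-1}^1 \circ \operatorname{Exp}_{m-1}^{m-\ell} = \operatorname{Exp}_m^{m-\ell} \circ \operatorname{Shift}_{m-1}^1$, $\operatorname{Exp}_\ell^{\ell-1} \circ \operatorname{Exp}_m^{m-\ell} = \operatorname{Exp}_m^{m-1}$, and $\operatorname{Exp}_m^{m-1} \circ \operatorname{Shift}_{m-\ell}^\ell = 0$. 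With all three columns exact and the top and bottom rows exact, the $3 \times 3$ lemma in the category of abelian groups forces the middle row to be exact, which is exactly the statement of the corollary.

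There is essentially no difficult step here: once the base case is in hand, the argument is a purely formal diagram chase, and the only care required is in tracking the subscripts and superscripts on the $\operatorname{Shift}$ and $\operatorname{Exp}$ maps.
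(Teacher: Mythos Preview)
Your argument is correct. The paper itself supplies no proof of this corollary: it is simply recorded with a citation to \cite[Remark 2.32]{AravireJacobORyan:2018}, so there is nothing to compare your approach against beyond noting that your induction on $\ell$ together with the $3\times 3$ lemma is a clean and standard way to deduce the general case from Theorem~\ref{Exact}.

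One small point worth making explicit: the variant of the nine lemma you are invoking---exact columns, exact top and bottom rows, hence exact middle row---requires as an additional hypothesis that the middle row be a complex, i.e.\ that $\operatorname{Exp}_m^{m-\ell}\circ\operatorname{Shift}_{m-\ell}^{\ell}=0$. (The more commonly stated versions, where the two \emph{adjacent} rows are assumed exact, do not need this.) You have essentially already observed why this holds---prepending $\ell$ zeros and then reading off the first $\ell$ coordinates gives the zero Witt vector---but it would strengthen the write-up to state it as a fifth identity alongside the four square-commutativities you list.
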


For the sake of simplicity, from here on we consider $\H_{p^{m-\ell}}^{n+1}(F)$ the subgroup of $\H_{p^m}^{n+1}(F)$ consisting of the elements whose exponent divides $p^{m-\ell}$, identifying this subgroup with $\operatorname{Shift}_{m-\ell}^{\ell}(\H_{p^{m-\ell}}^{n+1}(F))$. The group $\H_{p^\infty}^{n+1}(F)$ is thus defined to be $\lim_{m\rightarrow \infty} \H_{p^m}^{n+1}(F)$.

Recall also the connection (see \cite{Kato:1982} and \cite{EKM} for background) between $\H_2^{n+1}(F)$ and quadratic forms:
\begin{eqnarray*}
 \H_2^{n+1}(F) & \cong & I_q^{n+1} F/I_q^{n+2} F\\
\alpha \otimes \beta_1 \otimes \dots \otimes \beta_n &\mapsto & \langle \! \langle \beta_1,\dots,\beta_n,\alpha ]\!].
\end{eqnarray*}
This will be of significance when we get to the norm conditions for $p=2$.
\section{Norm Conditions}\label{section:norm}

Given a symbol $A \in \H^n_{p^m}(F)$ and $c \in F^\times$, in certain cases the triviality of $A \otimes c$ in $\H_{p^m}^{n+1}(F)$ implies that a norm condition is satisfied.
We list the known cases in the following theorems.
These will be used in the proofs in Section 4.

\begin{thm}[{\cite[Corollary 4.7.5]{GilleSzamuely:2017}}]\label{Brauer}
If a symbol $\omega \in \H_{p^m}^1(F)$ and $c\in F^\times$ satisfy $\omega\otimes c=0$ in $\H_{p^m}^2(F)$ then $c$ is a norm in $F_\omega$. 
\end{thm}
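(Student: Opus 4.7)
The plan is to exploit the split structure of the cyclic algebra directly. The hypothesis $\omega \otimes c = 0$ in $\H_{p^m}^2(F) \isom \operatorname{Br}_{p^m}(F)$ says exactly that the algebra $A := [\omega,c)_F$ is split, so $A \isom \End_F(V)$ for some $F$-vector space $V$ of dimension $p^m$. I will work under the assumption that $F_\omega/F$ is a genuine cyclic field extension of degree $p^m$ (the case where $\omega$ has smaller order reduces to this one componentwise, and the case $F_\omega = F$ is trivial since every element of $F$ is its own norm). Then the inclusion $F_\omega \subset A$ makes $V$ a module over the field $F_\omega$, and a dimension count over $F$ forces $V \isom F_\omega$ as $F_\omega$-modules; I fix such an identification once and for all.

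The next step is to analyse the action of the generator $y$ of $A$ on $V$. Let $\sigma$ denote the canonical generator of $\Gal(F_\omega/F)$ sending $\vec\theta$ to $\vec\theta+\vec 1$. The defining relation $y\vec\theta y^{-1} = \vec\theta + \vec 1$ in $A$ translates, under the identification $V = F_\omega$, into the $\sigma$-semilinearity condition
\[ T(av) \;=\; \sigma(a)\,T(v), \qquad a \in F_\omega,\ v \in V, \]
where $T \in \End_F(V)$ is the operator induced by $y$. Setting $d := T(1) \in F_\omega$ and specialising semilinearity at $v = 1$ yields the explicit formula $T(a) = \sigma(a)\,d$ for every $a \in F_\omega$.

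The proof then closes by a straightforward induction, which gives
\[ T^k(1) \;=\; \sigma^{k-1}(d)\,\sigma^{k-2}(d)\cdots\sigma(d)\,d \]
for every $k \geq 1$. The remaining defining relation $y^{p^m} = c$ forces $T^{p^m}$ to act as multiplication by $c$, and evaluating at $1$ produces
\[ c \;=\; \prod_{i=0}^{p^m-1}\sigma^i(d) \;=\; N_{F_\omega/F}(d), \]
exhibiting $c$ as a norm from $F_\omega$. The only genuinely non-routine step is the translation between the noncommutative defining relations of $A$ and the semilinearity of $T$; once $V$ is identified with $F_\omega$ as an $F_\omega$-module, the rest is formal, and this argument is essentially the characteristic-$p$, Witt-vector incarnation of the classical Hilbert $90$ proof that a split cyclic algebra has a norm presentation.
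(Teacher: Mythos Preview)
The paper does not supply a proof of this statement; it is quoted as Corollary~4.7.5 of Gille--Szamuely and used as a black box, so there is no in-paper argument to compare against.

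Your argument is correct and is precisely the classical proof (the one behind the cited reference): identify the split algebra with $\End_F(V)$, recognise $V$ as a one-dimensional $F_\omega$-vector space, and read off the norm from the semilinear action of $y$. One cosmetic remark: the phrase ``the case where $\omega$ has smaller order reduces to this one componentwise'' is imprecise. What actually happens when $[F_\omega:F]=p^k<p^m$ is that the $F$-subalgebra $F[\vec\theta]$ is still an \'etale $F$-algebra (a product of copies of the genuine cyclic field), $V$ still becomes a free $F[\vec\theta]$-module of rank one by the same dimension count, and your semilinearity computation goes through verbatim with $\sigma$ the automorphism of $F[\vec\theta]$ induced by $\vec\theta\mapsto\vec\theta+\vec 1$; alternatively, one may pass to the Brauer-equivalent cyclic algebra of degree $p^k$ and rerun your argument there. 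Either way this is routine and not a gap.
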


\begin{thm}[{\cite[Th\'eor\`eme 6]{Gille:2000}}]\label{GilleTheorem}
If a symbol $A \in \H_p^2(F)$ and $c\in F^\times$ satisfy $A \otimes c=0$ in $\H_p^3(F)$, then $c$ is a reduced norm in the division algebra representative of the class $A$.
\end{thm}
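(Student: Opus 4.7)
The plan is to split the argument by the prime $p$, since the case $p=2$ reduces to Pfister form theory while odd $p$ requires heavier cohomological input. The case where $A$ is trivial is immediate, so assume $A$ is represented by a degree-$p$ division algebra $D$.

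\textbf{Case $p=2$.} Here $A=[\omega, b)_F$ is a quaternion division algebra whose reduced norm form is the 2-fold quadratic Pfister form $N_A=\langle\!\langle b,\omega]]$. Under the isomorphism $\H_2^3(F)\cong I_q^3F/I_q^4F$ recalled in the preliminaries, the hypothesis $A\otimes c=0$ becomes $\langle\!\langle b,c,\omega]]\in I_q^4F$. A quadratic $n$-fold Pfister form lying in $I_q^{n+1}F$ is hyperbolic (a consequence of the Arason--Pfister Hauptsatz), so $\langle\!\langle c\rangle\!\rangle\otimes N_A$ is hyperbolic. Comparing anisotropic parts gives $cN_A\simeq N_A$, and by the roundness of quadratic Pfister forms this is equivalent to $c$ being represented by $N_A$, i.e., $c$ is a reduced norm from $D$.

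\textbf{Case $p$ odd.} Here I would transfer the triviality of $A\otimes c$ to the cyclic degree-$p$ splitting field $L=F_\omega$ of $A$. Since $A_L=0$, the restriction $\res[L/F](A\otimes c)$ vanishes tautologically; the content lies in the kernel of $\res[L/F]:\H_p^3(F)\to\H_p^3(L)$. The plan is to use the projection formula together with an analog of Hilbert~90 for $\H_p^3$ (Kato's vanishing theorem for the logarithmic differential-form presentation of $\H_p^{n+1}$) to write $\omega\otimes b\otimes c$ as $\cores[L/F](\xi)$ for some $\xi\in\H_p^2(L)$ built from $b|_L$ and $c|_L$. Applying \Tref{Brauer} over $L$ should then force $c|_L$ to be a norm from a further cyclic extension embedding in $A_L$. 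Combining this with the Wang--Albert description of the reduced norm subgroup of a degree-$p$ cyclic division algebra as the subgroup of $F^\times$ generated by $b$ and $N_{L/F}(L^\times)$ would yield that $c$ is a reduced norm from $D$.

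The main obstacle is the odd-$p$ case: the statement is in effect a norm principle for the degree-$p$ Severi--Brauer variety of $A$, and I see no elementary route to it. One must invoke Kato's Hilbert~90 theorem for $\H_p^{n+1}$, or equivalently a computation in the motivic cohomology of $SB(A)$. For $p=2$ the reduced norm form is itself a Pfister form and the descent is automatic via roundness; for $p\geq 3$ the reduced norm form is no longer a Pfister form, and the relevant cohomological machinery genuinely has to do more work, which is why the proof in \cite{Gille:2000} is substantially more involved.
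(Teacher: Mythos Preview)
The paper does not supply a proof of this statement; it is quoted from \cite{Gille:2000} as an external input, so there is no in-paper argument to compare against.

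Your $p=2$ case is correct and is in fact the $n=2$ specialization of \Tref{QuadraticNorm}, which the paper does prove (via the Pfister-neighbor argument rather than roundness, but the two are equivalent here).

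For odd $p$ your sketch has genuine gaps beyond the ones you flag. First, the ``Wang--Albert description'' of $\operatorname{Nrd}(D^\times)$ as the subgroup of $F^\times$ generated by $b$ and $N_{L/F}(L^\times)$ is not a known theorem; for degree $\geq 3$ the image of the reduced norm is not in general generated by these obvious elements, and establishing such a description would be at least as hard as the statement you are trying to prove. Second, the restriction--corestriction step as written is vacuous: since $A\otimes c$ is already zero in $\H_p^3(F)$ it lies in every kernel, and since $A_L$ is split, the assertion that $c|_L$ is a reduced norm in $A_L$ carries no information (every unit of $L$ is a reduced norm of the matrix algebra). Gille's actual proof proceeds through the machinery of Rost's cohomological invariants in positive characteristic and does not reduce to symbol manipulations, which is precisely why the present paper imports the result rather than reproving it.
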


\begin{thm}\label{QuadraticNorm}
If a symbol $A \in \H_2^n(F)$ and $c\in F^\times$ satisfy $A \otimes c=0$ in $\H_2^{n+1}(F)$, then $c$ is represented by the quadratic $n$-fold Pfister form representative of the class $A$.
\end{thm}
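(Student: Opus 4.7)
The plan is to translate the hypothesis into the language of quadratic forms via the Kato isomorphism $\H_2^{n+1}(F) \cong I_q^{n+1}F/I_q^{n+2}F$ recalled at the end of Section~2, and then apply two classical tools from the theory of quadratic Pfister forms in characteristic two: the Arason-Pfister Hauptsatz and the representation property of Pfister forms.

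Write $A = \alpha \otimes \beta_1 \otimes \dots \otimes \beta_{n-1}$ as a symbol in $\H_2^n(F)$. Under the Kato isomorphism in degree $n$, this class is represented by the $n$-fold quadratic Pfister form $\pi := \langle\!\langle \beta_1, \dots, \beta_{n-1}, \alpha ]\!]$. Correspondingly, the class $A \otimes c \in \H_2^{n+1}(F)$ is represented by the $(n+1)$-fold quadratic Pfister form $\pi \otimes \langle\!\langle c \rangle\!\rangle = \langle\!\langle c, \beta_1, \dots, \beta_{n-1}, \alpha ]\!]$. The hypothesis $A \otimes c = 0$ in $\H_2^{n+1}(F)$ is thus equivalent to the containment $\pi \otimes \langle\!\langle c \rangle\!\rangle \in I_q^{n+2}F$.

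The Arason-Pfister Hauptsatz in characteristic two (see \cite{EKM}) asserts that any anisotropic quadratic form lying in $I_q^{n+2}F$ has dimension at least $2^{n+2}$. Since $\dim(\pi \otimes \langle\!\langle c \rangle\!\rangle) = 2^{n+1}$, the form must already be hyperbolic over $F$. Finally, I would invoke the \emph{Pfister property}: for a non-degenerate quadratic Pfister form $\pi$ and a scalar $c \in F^\times$, the tensor product $\pi \otimes \langle\!\langle c \rangle\!\rangle$ is hyperbolic if and only if $c$ is a similarity factor of $\pi$, and for Pfister forms the group of similarity factors coincides with the set of nonzero represented values; this yields the desired conclusion that $c$ is represented by $\pi$. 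The only point requiring care is the characteristic-two provenance of both the Hauptsatz and of the equivalence between hyperbolicity of $\pi \otimes \langle\!\langle c \rangle\!\rangle$ and representability of $c$ by $\pi$, but both statements are classical and recorded in \cite{EKM}, so no additional argument is needed once the cohomological hypothesis has been reformulated in quadratic-form terms.
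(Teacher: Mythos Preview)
Your proof is correct and follows essentially the same route as the paper: translate the vanishing of $A\otimes c$ into hyperbolicity of the $(n+1)$-fold Pfister form $\langle\!\langle c\rangle\!\rangle\otimes\varphi$, then deduce that $c$ is represented by $\varphi$. The only cosmetic differences are that you make the use of the Arason--Pfister Hauptsatz explicit (the paper passes directly from $A\otimes c=0$ to hyperbolicity without naming it), and in the final step you invoke the equality of similarity factors and represented values for Pfister forms, whereas the paper argues via the Pfister neighbor $\langle c\rangle\perp\varphi$ and \cite[Proposition~9.8]{EKM}; both are standard and equivalent here.
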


\begin{proof}
The class of $A$ is represented by a quadratic $n$-fold Pfister form $\varphi=\langle \! \langle \beta_1,\dots,\beta_{n-1},\alpha ] \!]$.
Since $A \otimes c=0$ in $\H_2^{n+1}(F)$, the $(n+1)$-fold Pfister form $\langle \! \langle c \rangle \! \rangle \otimes \varphi$ is hyperbolic, and therefore its Pfister neighbor $\langle c \rangle \perp \varphi$ is isotropic, which in turn implies that $c$ is represented by $\varphi$ by \cite[Proposition 9.8]{EKM}.
\end{proof}

\section{Common Splitting Fields}

\begin{lem}[{\cite[9.1.11]{GilleSzamuely:2017}}] \label{GilleLemma}
Let $p$ be a prime integer, $F$ a field of $\operatorname{char}(F)=p$ and $K=F(\sqrt[p^r]b)$ a purely inseparable extension of degree $p^r$, and let $L/F$ be a finite separable extension. Then there exists an element $v \in LK$ whose norm, $\norm_{LK/K}(v)$, generates the extension $K/F$.
\end{lem}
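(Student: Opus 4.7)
The plan is to construct $v$ in the explicit form $v = 1 + c\beta\, b^{1/p^r}$ with $\beta \in L$ and $c \in F^{\times}$ to be determined. First I would dispatch the vacuous case $r = 0$ (where any nonzero element works) and note that for $r \geq 1$ the field $F$ is automatically infinite, since perfect fields admit no nontrivial purely inseparable extensions. Because $L/F$ is separable while $K/F$ is purely inseparable, the two extensions are linearly disjoint over $F$, so $[LK:K] = n := [L:F]$ and the $n$ distinct $F$-embeddings $\sigma_1,\ldots,\sigma_n$ of $L$ into an algebraic closure extend to $K$-embeddings of $LK$, which will let us compute $N_{LK/K}$ by a product.

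The crucial structural input is that the intermediate fields of $K/F$ form a chain $F \subsetneq F(b^{1/p}) \subsetneq \cdots \subsetneq F(b^{1/p^{r-1}}) \subsetneq K$, so an element of $K$ generates $K/F$ precisely when it does not lie in $F(b^{1/p^{r-1}})$. In the $F$-basis $\{(b^{1/p^r})^i\}_{i=0}^{p^r-1}$ of $K$, membership in $F(b^{1/p^{r-1}})$ is equivalent to all coefficients at exponents $i$ with $p \nmid i$ vanishing. Hence it suffices to produce a $v$ whose norm, when expanded in this basis, has nonzero coefficient at $b^{1/p^r}$ itself.

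For arbitrary $\beta \in L$ and $c \in F$, direct computation gives
\[
N_{LK/K}\!\bigl(1 + c\beta\, b^{1/p^r}\bigr) \;=\; \prod_{i=1}^{n}\bigl(1 + c\,\sigma_i(\beta)\, b^{1/p^r}\bigr) \;=\; \sum_{k=0}^{n} c^{k}\, e_{k}(\beta)\, (b^{1/p^r})^{k},
\]
where $e_k(\beta) \in F$ is the $k$-th elementary symmetric polynomial in $\sigma_1(\beta), \ldots, \sigma_n(\beta)$; in particular $e_1(\beta) = \operatorname{Tr}_{L/F}(\beta)$. Reducing modulo the identity $(b^{1/p^r})^{p^r} = b$ and collecting, the coefficient at $b^{1/p^r}$ becomes the polynomial
\[
Q(c) \;=\; \sum_{\substack{k \ge 0 \\ 1+kp^{r} \le n}} e_{1+kp^{r}}(\beta)\, b^{k}\, c^{1+kp^{r}} \;\in\; F[c].
\]

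To close the argument, I would choose $\beta \in L$ with $\operatorname{Tr}_{L/F}(\beta) \neq 0$, which is possible because the trace form of a separable extension is non-degenerate. Then the coefficient of $c^{1}$ in $Q(c)$ is $\operatorname{Tr}_{L/F}(\beta) \neq 0$, so $Q$ is a nonzero polynomial; since $F$ is infinite, some $c \in F^{\times}$ satisfies $Q(c) \neq 0$, and for such $c$ the element $v = 1 + c\beta\, b^{1/p^r}$ has a norm lying outside $F(b^{1/p^{r-1}})$, hence generating $K/F$. The only real obstacle is the bookkeeping in the reduction modulo $(b^{1/p^r})^{p^r} - b$; the decisive trick is to introduce the scalar parameter $c$, so that \textquotedblleft the $b^{1/p^r}$-coefficient is nonzero\textquotedblright\ becomes \textquotedblleft a specific nonzero polynomial in $c$ over the infinite field $F$ does not vanish identically,\textquotedblright\ which is automatic.
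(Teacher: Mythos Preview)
Your argument is correct. Note, however, that the paper does not supply its own proof of this lemma: it is quoted verbatim from Gille--Szamuely \cite[9.1.11]{GilleSzamuely:2017}, so strictly speaking there is no ``paper's proof'' to compare against.

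For what it is worth, the proof in Gille--Szamuely (going back to Albert) is close in spirit but organized differently. They take a primitive element $c$ of $L/F$ with minimal polynomial $f$, set $v = b^{1/p^r} - c$, and observe that $N_{LK/K}(v) = f(b^{1/p^r})$; the fact that this value lies outside $F(b^{1/p^{r-1}})$ is then extracted from the \emph{separability} of $f$ (essentially, $f' \neq 0$). Your version replaces the primitive element by an element $\beta$ of nonzero trace and introduces the auxiliary scalar $c \in F^{\times}$, turning the key nonvanishing into the statement that a polynomial $Q(c)$ with nonzero linear term cannot vanish identically on an infinite field. Both routes exploit the same two structural facts---linear disjointness of $L$ and $K$, and the chain of intermediate fields in a simple purely inseparable extension---so they are genuinely parallel. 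Your parametric trick has the mild advantage that the case $[L:F] > p^{r}$ (where reduction modulo $(b^{1/p^r})^{p^r} = b$ actually mixes coefficients) is handled with no extra effort, whereas in the minimal-polynomial approach one must be a little more careful there.
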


\begin{prop}[{cf. \cite[9.1.9]{GilleSzamuely:2017}}]\label{inseparablecommon} Let $p$ be a prime integer, $F$ a field of $\operatorname{char}(F)=p$, and for $1\leq i \leq m$ let $A_i$ be a symbol $F$-algebra of degree $p^{r_i}$. Then there exists an element $b \in F$ such that the extension $F(\sqrt[p^r]{b})$ splits all the $A_i$'s where $r\leq \sum_{i=1}^mr_i$.
\end{prop}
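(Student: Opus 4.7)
The plan is to induct on $m$. The base case $m=1$ is immediate: take $b = \beta_1$ and $r = r_1$, so that $F(\sqrt[p^{r_1}]{\beta_1})$ is a simple purely inseparable splitting field for $A_1 = [\omega_1, \beta_1)_F$.

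For the inductive step, suppose $K := F(\sqrt[p^s]{b})$ splits $A_1, \ldots, A_{m-1}$ for some $s \leq \sum_{i=1}^{m-1} r_i$. Write $A_m = [\omega_m, \beta_m)_F$ and let $L := F_{\omega_m}$ denote its cyclic (separable) maximal subfield of degree $p^{r_m}$ over $F$. The key move is to apply \Lref{GilleLemma} to the pair $K/F$, $L/F$ in order to produce $v \in LK$ such that $\alpha := N_{LK/K}(v)$ generates $K$ over $F$. I then set $K' := K(\delta)$, where $\delta$ satisfies $\delta^{p^{r_m}} = \alpha \beta_m$ in an algebraic closure.

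Three claims must then be verified. First, $K' = F(\delta)$ is simple purely inseparable over $F$: since $\beta_m \in F^\times$ and $\alpha$ generates $K/F$, so does $\delta^{p^{r_m}} = \alpha\beta_m$, hence $K \subseteq F(\delta)$, giving $F(\delta) = K(\delta) = K'$. Second, $[K' : F] = p^{s + r_m}$, which reduces to showing $\alpha\beta_m \notin K^p$; this follows from $K^p \subseteq F(\sqrt[p^{s-1}]{b})$ being a proper subfield of $K$ while $\alpha\beta_m$ generates $K/F$. Third, $K'$ splits every $A_i$: for $i < m$ this is inherited from $K \subseteq K'$, while for $A_m$ one must exhibit $\beta_m$ as a norm from the cyclic extension $LK'/K'$. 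Here I would use that $\Gal(LK'/K')$ identifies with $\Gal(LK/K)$ and acts identically on $LK$, giving $N_{LK'/K'}(v) = N_{LK/K}(v) = \alpha$, while $N_{LK'/K'}(\delta) = \delta^{[LK':K']} = \delta^{p^{r_m}} = \alpha \beta_m$ since $\delta \in K'$ is Galois-fixed. Combining these yields $N_{LK'/K'}(\delta v^{-1}) = \beta_m$, and the standard norm-splits-cyclic-algebras criterion (the converse direction of \Tref{Brauer}) then forces $A_m \otimes_F K' = [\omega_m, \beta_m)_{K'}$ to be trivial.

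The main obstacle is harnessing \Lref{GilleLemma} at the correct level in the tower: once $v$ is produced, both the simplicity of $K'/F$ and the norm identity for $\beta_m$ become consequences of $\alpha$ being a primitive generator of $K/F$. A secondary subtlety is confirming that $[K':F]$ attains the full value $p^{s+r_m}$ rather than collapsing, which once more relies on the generating property of $\alpha\beta_m$.
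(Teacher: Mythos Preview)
Your proof is correct and follows essentially the same inductive argument as the paper's, with the identical use of \Lref{GilleLemma} and the same construction $K'=K\bigl(\sqrt[p^{r_m}]{\alpha\beta_m}\bigr)$; the only cosmetic difference is that the paper verifies $A_m\otimes K'$ is split via symbol manipulations, whereas you exhibit $\beta_m$ directly as the norm $N_{LK'/K'}(\delta v^{-1})$. One small remark: your second claim $[K':F]=p^{s+r_m}$ is stronger than what the proposition needs (only $\leq$ is required), and your justification tacitly assumes $s\geq 1$; this does not affect the argument.
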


\begin{proof}
The proof is by induction on $m$. The case $m=2$ is Albert’s (see \cite[9.1.9]{GilleSzamuely:2017}) and the induction step argument below follows Albert’s proof.
Suppose it holds true for any integer smaller than $m$.
Then there exists a common splitting field $E=F[\sqrt[p^t]{b}]$ for $A_1,\dots,A_{m-1}$ where $t\leq \sum_{i=1}^{m-1} r_i$.
Write $A_m=[\omega,\gamma)$.
Then by Lemma \ref{GilleLemma}, by taking $K=E$ and $L=F_\omega$ (which is a cyclic subfield of $A_m$), there exists an element $v \in EL$  whose norm $z=\norm_{LE/E}(v)$ generates $E/F$.
Now, take $T=E(\sqrt[p^{r_m}]{z\gamma})$.
Clearly $T$ splits $A_1,\dots,A_{m-1}$.
Since $z$ generates $E$ over $F$, the element
$y=\sqrt[p^{r_m}]{z\gamma}$ generates $T$ over $F$.
It remains to explain why $T$ splits $A_m$.
$$A_m \otimes T=[\omega,\gamma) \otimes T=[\omega,y^{p^{r_m}}z^{-1}) \otimes T=[\omega,y^{p^{r_m}}) \otimes [\omega,z^{-1}) \otimes T.$$
The algebra $[\omega,y^{p^{r_m}})$ is split because $[\omega,y^{p^{r_m}})=[\underbrace{\omega+\dots+\omega}_{p^{r_m} \ \text{times}},y)$ and $\omega \in H_{p^{r_m}}^1(F)$.
The algebra $[\omega,z^{-1})\otimes T$ is split because $z$ is a norm in $EL/E$ (and so also in $TL/T$).
\end{proof}

\begin{cor}\label{sl1}
Every class $A$ in $\H_{p^m}^{n+1}(F)$ is a single symbol in $\H_{p^t}^{n+1}(F)$ for a large enough $t$. More precisely, $t \leq m\ell^n$ where $\ell$ is the symbol length of $A$.
\end{cor}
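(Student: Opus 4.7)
The plan is to proceed by induction on $n$, peeling off one $b$-slot at a time. The base case $n=0$ is immediate, since every element of $\H_{p^m}^1(F)$ is already a single symbol, so $\ell = 1$ and $t \leq m = m\ell^0$.

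For the inductive step, write $A = \sum_{i=1}^{\ell} \omega_i \otimes b_{i,1} \otimes \dots \otimes b_{i,n}$ in $\H_{p^m}^{n+1}(F)$ and concentrate on the last slot. First I would apply Proposition \ref{inseparablecommon} to the $\ell$ Brauer classes $[\omega_i, b_{i,n}) \in \H_{p^m}^2(F)$, producing a common simple purely inseparable splitting field $F(\sqrt[p^r]{\beta})$ with $r \leq m\ell$ (inflating $r$ if needed so that $r \geq m$, while preserving $r \leq m\ell$). Each of these Brauer classes, viewed in $\H_{p^r}^2(F)$, is Brauer equivalent to a symbol algebra of degree $p^r$ with $F(\sqrt[p^r]{\beta})$ as a maximal subfield, and the classical structure theorem for symbol algebras in characteristic $p$ therefore forces an equality $\omega_i \otimes b_{i,n} = \omega_i' \otimes \beta$ in $\H_{p^r}^2(F)$ for some $\omega_i' \in W_r(F)$.

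Cupping this identity with $b_{i,1} \otimes \dots \otimes b_{i,n-1}$ and using the anti-symmetry of Kato-Milne symbols in their $b$-slots, I would obtain
\[
\omega_i \otimes b_{i,1} \otimes \dots \otimes b_{i,n-1} \otimes b_{i,n} = \omega_i' \otimes b_{i,1} \otimes \dots \otimes b_{i,n-1} \otimes \beta
\]
in $\H_{p^r}^{n+1}(F)$, with the sign contributions from permuting slots agreeing on both sides. Summing over $i$ and factoring $\beta$ out of the last slot by multiplicativity gives $A = B \otimes \beta$, where $B = \sum_{i=1}^{\ell} \omega_i' \otimes b_{i,1} \otimes \dots \otimes b_{i,n-1}$ lies in $\H_{p^r}^n(F)$ and has symbol length at most $\ell$. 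The inductive hypothesis applied to $B$ yields a single-symbol representative in $\H_{p^{t'}}^n(F)$ with $t' \leq r\ell^{n-1} \leq m\ell \cdot \ell^{n-1} = m\ell^n$. Consequently $A = B \otimes \beta$ is a single symbol in $\H_{p^{t'}}^{n+1}(F)$ with $t' \leq m\ell^n$.

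The main obstacle I anticipate is the pivotal rewrite $\omega_i \otimes b_{i,n} = \omega_i' \otimes \beta$ in $\H_{p^r}^2(F)$. This rests on the classical fact that a central simple algebra of degree $p^r$ in characteristic $p$ containing a simple purely inseparable maximal subfield $F(\sqrt[p^r]{\beta})$ is a cyclic algebra with $\beta$ in its second slot, realized as $[\omega', \beta)$ with $\omega' \in W_r(F)$. Once this is granted, the remaining bookkeeping -- signs under permutations of $b$-slots, the mild enlargement of $r$ to at least $m$, and the invocation of the inductive hypothesis -- is routine.
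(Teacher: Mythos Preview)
Your proof is correct and follows essentially the same approach as the paper's: both apply Proposition~\ref{inseparablecommon} to the $\ell$ symbol algebras obtained by pairing each $\omega_i$ with one of the $b$-slots, use Albert's structure theorem to rewrite with a common inseparable slot, and iterate $n$ times to reach the bound $t \leq m\ell^n$. The only cosmetic differences are that the paper peels off the \emph{first} $b$-slot (so that ``cupping'' is literally appending the remaining slots on the right) and phrases the iteration directly rather than as a formal induction on $n$; your choice of the last slot forces an extra invocation of anti-symmetry, but this is harmless.
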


\begin{proof}
Take a class $A$ in $\H_{p^m}^{n+1}(F)$ and write it as a sum of symbols 
$$A=\sum_{i=1}^\ell \omega_i \otimes \beta_{1,i} \otimes \dots \otimes \beta_{n,i}.$$
Since each $\omega_i \otimes \beta_{1,i}$ is the class of the symbol algebras $[\omega_i,\beta_{1,i})_{p^m,F}$, 
by Proposition \ref{inseparablecommon} there exists a common purely inseparable splitting field $F[\sqrt[p^r]{c}]$ where $r\leq m\ell$, and thus there exist $\omega_1',\dots,\omega_\ell' \in W_r(F)$ such that for each $i\in \{1,\dots,\ell\}$, $[\omega_i,\beta_{1,i})_{p^m,F}=[\omega_i',c)_{p^r,F}$ (see \cite[Chapter VII, Theorem 28]{Albert:1968} or \cite[Theorem 9.1.1]{GilleSzamuely:2017}).
Therefore, as an element of $\H_{p^r}^{n+1}(F)$, $A$ can be written as
$$A=\sum_{i=1}^\ell \omega_i' \otimes c \otimes \beta_{2,i} \otimes \dots \otimes \beta_{n,i}.$$
Since for each $i$, $\omega_i' \otimes c \otimes \beta_{2,i} \otimes \dots \otimes \beta_{n,i}=(-1)^{n-1}\omega_i'  \otimes \beta_{2,i} \otimes \dots \otimes \beta_{n,i} \otimes c$, this process can be repeated until we obtain an integer $t$ such that as an element of $\H_{p^t}^{n+1}(F)$, $A$ can be written as
$$A=\sum_{i=1}^\ell \rho_i \otimes c \otimes c_2 \otimes \dots \otimes c_n,$$
for some $\rho_1,\dots,\rho_\ell \in W_t(F)$ and $c_2,\dots,c_n \in F^\times$. But then $A=(\rho_1+\dots+\rho_\ell) \otimes c \otimes c_2 \otimes \dots \otimes c_n$ is a single symbol in $\H_{p^t}^{n+1}(F)$.
\end{proof}

We now turn to proving that sharing purely inseparable simple splitting fields implies sharing cyclic splitting fields.
This requires some preparation:

\begin{prop}\label{depend}
Let $p$ be a prime integer, $t$ a positive integer, $F$ a field of $\operatorname{char}(F)=p$ and $r_0,\dots,r_t$ be $\mathbb{F}_p$-independent elements in $F$.
Then $\wp(\frac{r_1}{r_0}),\dots,\wp(\frac{r_t}{r_0})$ are $\mathbb{F}_p$-independent as well.
\end{prop}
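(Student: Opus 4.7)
The plan is to exploit the fact that the Artin--Schreier operator $\wp(x)=x^{p}-x$ is $\mathbb{F}_{p}$-linear on $F$ with kernel exactly $\mathbb{F}_{p}$. Consequently, asking whether $\wp(r_{1}/r_{0}),\dots,\wp(r_{t}/r_{0})$ are $\mathbb{F}_{p}$-linearly independent in $F$ is equivalent to asking whether the residues $\overline{r_{1}/r_{0}},\dots,\overline{r_{t}/r_{0}}$ in $F/\mathbb{F}_{p}$ are $\mathbb{F}_{p}$-linearly independent.

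Once this reduction is in place, I would argue by contradiction. Suppose scalars $\lambda_{1},\dots,\lambda_{t}\in\mathbb{F}_{p}$, not all zero, satisfy
$$\sum_{i=1}^{t}\lambda_{i}\,\frac{r_{i}}{r_{0}}\;\equiv\;0\pmod{\mathbb{F}_{p}},$$
so that $\sum_{i=1}^{t}\lambda_{i}(r_{i}/r_{0})=c$ for some $c\in\mathbb{F}_{p}$. Multiplying through by $r_{0}\in F^{\times}$ gives the relation
$$-c\cdot r_{0}+\lambda_{1}r_{1}+\cdots+\lambda_{t}r_{t}=0$$
inside $F$, with all coefficients in $\mathbb{F}_{p}$. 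The assumed $\mathbb{F}_{p}$-independence of $r_{0},r_{1},\dots,r_{t}$ then forces $c=0$ and $\lambda_{1}=\cdots=\lambda_{t}=0$, a contradiction. Hence $\overline{r_{1}/r_{0}},\dots,\overline{r_{t}/r_{0}}$ are $\mathbb{F}_{p}$-independent in $F/\mathbb{F}_{p}$, and applying $\wp$ preserves this independence.

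I do not anticipate any real obstacle here; the argument is essentially a one-line observation once one remembers that $\ker\wp=\mathbb{F}_{p}$ and that clearing the common denominator $r_{0}$ translates a dependence of the quotients into a dependence of the $r_{i}$'s. The only mild subtlety worth flagging in the write-up is the appearance of the extra constant $c$: it is what lets the dependence of quotients modulo $\mathbb{F}_{p}$ be reframed as a full $\mathbb{F}_{p}$-linear relation among $r_{0},\dots,r_{t}$, which is precisely what the hypothesis prohibits.
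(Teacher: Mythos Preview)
Your proposal is correct and essentially identical to the paper's own argument. The paper also argues by contradiction, uses $c_i^p=c_i$ to pull the scalars inside $\wp$ (which is precisely your observation that $\wp$ is $\mathbb{F}_p$-linear), concludes that $\sum_i \lambda_i r_i/r_0$ lies in $\ker\wp=\mathbb{F}_p$, and then clears the denominator $r_0$ to obtain a nontrivial $\mathbb{F}_p$-relation among $r_0,\dots,r_t$; the only cosmetic difference is that you phrase the first step as passing to $F/\mathbb{F}_p$ rather than writing out the additivity explicitly.
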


\begin{proof}
Suppose there exist $c_1,\dots,c_t \in \mathbb{F}_p$, not all zero, such that
$$c_1\wp(\frac{r_1}{r_0})+\dots+c_t\wp(\frac{r_t}{r_0})=0.$$
Since for each $i \in \{1,\dots,t\}$, $c_i=c_i^p$, we obtain
$$\wp(c_1\frac{r_1}{r_0})+\dots+\wp(c_t\frac{r_t}{r_0})=0,$$
and so
$$\wp(c_1\frac{r_1}{r_0}+\dots+c_t\frac{r_t}{r_0})=0.$$
This means $c_1\frac{r_1}{r_0}+\dots+c_t\frac{r_t}{r_0}$ is a root of the polynomial $\wp(\lambda)=\lambda^p-\lambda$, but these roots are exactly the elements of $\mathbb{F}_p$.
Therefore, $c_1\frac{r_1}{r_0}+\dots+c_t\frac{r_t}{r_0}=c_0$ for some $c_0 \in \mathbb{F}_p$, and so
$$c_1 r_1+\dots+c_t r_t-c_0 r_0=0,$$
contradiction.
\end{proof}

\begin{lem}\label{WittVec}
Let $t$ be a nonnegative integer, $m$ a positive integer and $F$ be a field of $\operatorname{char}(F)=p>0$, and let $r_0,\dots,r_t \in F$ be $\mathbb{F}_p$-independent.
Then for any $\psi_0,\dots,\psi_t \in F$ there exists $\pi \in F$ for which $\psi_i \equiv r_i^{p^t} \pi \pmod{\wp(F)}$ for any $i \in \{0,\dots,t\}$.
\end{lem}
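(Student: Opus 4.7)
The plan is to set up an inductive argument based on the basic identity
$$c^p\,\wp(X) = \wp(cX) - \wp(c)\,X \equiv -\wp(c)\,X \pmod{\wp(F)},$$
valid for any $c,X\in F$ in characteristic $p$, which iterates to $c^{p^j}\wp(X) \equiv -\wp(c^{p^{j-1}})\,X \pmod{\wp(F)}$ for $j\ge 1$, and in which $\wp$ commutes with Frobenius via $\wp(c^{p^j}) = \wp(c)^{p^j}$.

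First I would try the ansatz $\pi = (\psi_0+\wp(a))/r_0^{p^t}$ with $a\in F$ to be chosen, which handles the $i=0$ congruence automatically. Writing $s_i = r_i/r_0$ and $w_i = \psi_i - s_i^{p^t}\psi_0$, for $i\ge 1$ the remaining congruences read $s_i^{p^t}\wp(a)\equiv w_i\pmod{\wp(F)}$, and the iterated identity transforms them into
$$\wp(s_i)^{p^{t-1}}\,a \equiv -w_i\pmod{\wp(F)}, \qquad i=1,\dots,t.$$
Since $\{1,s_1,\dots,s_t\}$ is the scalar $1/r_0$ times the $\mathbb{F}_p$-independent set $\{r_0,\dots,r_t\}$, it is $\mathbb{F}_p$-independent. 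The lemma thus reduces to the following auxiliary claim $(\mathrm{IP}_k)$ at $k=t$: for every $k\ge 1$, every $\sigma_1,\dots,\sigma_k\in F$ for which $\{1,\sigma_1,\dots,\sigma_k\}$ is $\mathbb{F}_p$-independent, and every $y_1,\dots,y_k\in F$, there exists $a\in F$ with $\wp(\sigma_i)^{p^{k-1}}\,a\equiv y_i\pmod{\wp(F)}$ for all $i$. The case $t=0$ is trivial, taking $\pi=\psi_0/r_0$.

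Next I would prove $(\mathrm{IP}_k)$ by induction on $k$. The case $k=1$ is immediate: the independence of $\{1,\sigma_1\}$ gives $\sigma_1\notin\mathbb{F}_p$, hence $\wp(\sigma_1)\ne 0$, so $a=y_1/\wp(\sigma_1)$ works. For the inductive step I use the parallel ansatz $a = (y_1 + \wp(a_1))/\wp(\sigma_1)^{p^{k-1}}$, which takes care of the $i=1$ congruence. Setting $\tau_i = \wp(\sigma_i)/\wp(\sigma_1)$ for $i=2,\dots,k$ and applying the basic identity once more, the remaining congruences become
$$\wp(\tau_i)^{p^{k-2}}\,a_1\equiv y'_i\pmod{\wp(F)}, \qquad i=2,\dots,k,$$
for explicit $y'_i\in F$. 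The induction hypothesis now applies provided $\{1,\tau_2,\dots,\tau_k\}$ is $\mathbb{F}_p$-independent; this is essentially the content of Proposition \ref{depend}: any putative $\mathbb{F}_p$-linear relation, multiplied by $\wp(\sigma_1)$, becomes $\wp$ applied to an $\mathbb{F}_p$-linear combination of $\sigma_1,\dots,\sigma_k$, which therefore lies in $\mathbb{F}_p$, and the independence of $\{1,\sigma_1,\dots,\sigma_k\}$ forces the coefficients to vanish.

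The main technical obstacle is the bookkeeping: keeping track of the dropping exponent $p^{k-1}\to p^{k-2}$, the shrinking independent set $\{1,\sigma_1,\dots,\sigma_k\}\to\{1,\tau_2,\dots,\tau_k\}$, and the explicit shape of the $y'_i$ at each step. Once the key identity and the independence-propagation step are verified, the remainder is a clean descent on $k$ that terminates in the one-equation case.
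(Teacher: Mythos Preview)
Your proof is correct and follows essentially the same route as the paper's: both argue by induction, eliminate one congruence at each step via an ansatz that absorbs a $\wp$-term, and reduce the coefficients from $r_i^{p^t}$ to $\wp(r_i/r_0)^{p^{t-1}}$, invoking Proposition~\ref{depend} to guarantee the $\mathbb{F}_p$-independence needed for the next step. The only difference is packaging: you isolate the key congruence $c^{p}\wp(X)\equiv -\wp(c)X\pmod{\wp(F)}$ and run the induction on the auxiliary claim $(\mathrm{IP}_k)$, whereas the paper inducts directly on the lemma's statement with explicit auxiliary variables $X_i,T_i$.
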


\begin{proof}
By induction on $t$.
The statement is clearly correct for $t=0$.
Suppose it is correct for $t-1$.
Consider the system
\begin{eqnarray*}
\psi_0 & \equiv &  r_0^{p^t} \pi \pmod{\wp(F)}\\
\vdots & & \\
\psi_t & \equiv &  r_t^{p^t} \pi \pmod{\wp(F)}\\
\end{eqnarray*}
A solution will be obtained if we can find $X_0,\dots,X_t \in F$ for which 
\begin{eqnarray*}
\psi_0 & = &  r_0^{p^t} \pi+X_0^p-X_0\\
\vdots & & \\
\psi_t & = &  r_t^{p^t} \pi+X_t^p-X_t\\
\end{eqnarray*}
which is equivalent to 
\begin{eqnarray*}
\frac{1}{r_0^{p^t}}\psi_0 & = &  \pi+\frac{1}{r_0^{p^t}}(X_0^p-X_0)\\
\vdots & & \\
\frac{1}{r_t^{p^t}}\psi_t & = &  \pi+\frac{1}{r_t^{p^t}}(X_t^p-X_t)\\
\end{eqnarray*}

By subtracting the first equation from each of the other equations and plugging in $X_i=(T_i+\frac{r_i^{p^{t-1}}}{r_0^{p^{t-1}}}X_0)$ for $i \geq 1$, we get
\begin{eqnarray*}
\frac{1}{r_1^{p^t}}\psi_1-\frac{1}{r_0^{p^t}} \psi_0 & = &  \frac{1}{r_1^{p^t}}(T_1^p-T_1)-(\frac{1}{r_1^{p^{t-1}(p-1)}r_0^{p^{t-1}}}-\frac{1}{r_0^{p^t}})X_0\\
\vdots & & \\
\frac{1}{r_t^{p^t}}\psi_t-\frac{1}{r_0^{p^t}} \psi_0 & = &  \frac{1}{r_t^{p^t}}(T_t^p-T_t)-(\frac{1}{r_t^{p^{t-1}(p-1)}r_0^{p^{t-1}}}-\frac{1}{r_0^{p^t}})X_0\\
\end{eqnarray*}
Which is equivalent to
\begin{eqnarray*}
\psi_1-\frac{r_1^{p^t}}{r_0^{p^t}} \psi_0 & \equiv &  (\frac{r_1^{p^{t-1}}}{r_0^{p^{t-1}}}-\frac{r_1^{p^t}}{r_0^{p^t}})X_0  \pmod{\wp(F)}\\
\vdots & & \\
\psi_t-\frac{r_t^{p^t}}{r_0^{p^t}} \psi_0 & \equiv &  (\frac{r_t^{p^{t-1}}}{r_0^{p^{t-1}}}-\frac{r_t^{p^t}}{r_0^{p^t}})X_0  \pmod{\wp(F)}\\
\end{eqnarray*}
The latter satisfies the induction hypothesis because
for each $i \in \{1,\dots,t\}$, $\frac{r_i^{p^{t-1}}}{r_0^{p^{t-1}}}-\frac{r_i^{p^t}}{r_0^{p^t}}=(\wp(-\frac{r_i}{r_0}))^{p^{t-1}}$ and the elements $\wp(\frac{r_1}{r_0}),\dots,\wp(\frac{r_t}{r_0})$ are $\mathbb{F}_p$-independent by Proposition \ref{depend}, and therefore has a solution. Hence, the original system has a solution.
\end{proof}

\begin{rem}
It is an immediate result of Lemma \ref{WittVec} that the essential dimension of $(\mathbb{Z}/p\mathbb{Z})^{\times (t+1)}$ over fields $F$ of $\operatorname{char}(F)=p$ with $|F| \geq p^{t+1}$ is 1, a fact that is known in the literature, see \cite[Remark 3.8]{BerhuyFavi:2003} and \cite[Lemma 2]{Ledet:2004}).
\end{rem}

\begin{thm}\label{cyclic}
Let $t$ be a positive integer and $A_0,A_1,\dots,A_t$ be $t+1$ symbol algebras of degree $p^m$ over an infinite field $F$ of $\operatorname{char}(F)=p>0$ and let $F[\sqrt[p^m]{\beta}]$ be a common splitting field of theirs. Take $r_1,\dots,r_t \in F$ which are $\mathbb{F}_p$-independent. Then there exists $\omega \in W_m(F)$ such that $A_i=[\omega,\beta-r_i^{p^{m+t-1}})_{p^m,F}$ for each $i \in \{1,\dots,t\}$ and $A_0=[\omega,\beta)_{p^m,F}$.
\end{thm}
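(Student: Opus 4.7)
My plan is to reduce the theorem to a system in Kato--Milne cohomology whose solvability follows from a Witt-vector enhancement of \Lref{WittVec}.

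First I would invoke the classical fact that if a central simple algebra of degree $p^m$ admits $F[\sqrt[p^m]{\beta}]$ as a purely inseparable maximal subfield then it is isomorphic to a symbol $F$-algebra with $\beta$ in the second slot (see \cite[Chapter VII, Theorem 28]{Albert:1968} or \cite[Theorem 9.1.1]{GilleSzamuely:2017}). This lets me write $A_i=[\omega_i,\beta)_{p^m,F}$ for some $\omega_i \in W_m(F)$, $i=0,1,\dots,t$. Setting $\theta=\sqrt[p^m]{\beta}$, the characteristic-$p$ identity
\[
\beta-r_i^{p^{m+t-1}}=\theta^{p^m}-(r_i^{p^{t-1}})^{p^m}=(\theta-r_i^{p^{t-1}})^{p^m}
\]
exhibits $\beta-r_i^{p^{m+t-1}}$ as a $p^m$-th power in $F[\theta]$, so $F[\theta]$ splits the symbol $[\omega,\beta-r_i^{p^{m+t-1}})_{p^m,F}$ for every $\omega \in W_m(F)$.

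Next, I would look for $\omega$ of the form $\omega=\omega_0-\eta$ with $\eta\otimes\beta=0$ in $\H^2_{p^m}(F)$ (which forces $A_0=[\omega,\beta)$). The remaining conditions $A_i=[\omega,\beta-r_i^{p^{m+t-1}})$ for $i\geq 1$ translate, via additivity in the second slot, into the system
\[
(\omega_i-\omega_0)\otimes\beta\;=\;\omega\otimes\bigl(1-r_i^{p^{m+t-1}}/\beta\bigr)\qquad (i=1,\ldots,t)
\]
in $\H^2_{p^m}(F)$. Both sides lie in the kernel of restriction to $\H^2_{p^m}(F[\theta])$ (the left because $\beta=\theta^{p^m}$ is a $p^m$-th power in $F[\theta]$, the right by the identity above), and hence may be rewritten as symbols of the form $[\nu,\beta)_{p^m,F}$. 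An explicit Artin--Schreier--Witt computation should produce, for each $i$, a Witt vector $\xi_i(\omega,r_i)\in W_m(F)$ depending linearly on $\omega$ modulo $\wp(W_m(F))$, such that $\omega\otimes(1-r_i^{p^{m+t-1}}/\beta)=[\xi_i(\omega,r_i),\beta)_{p^m,F}$, with leading term of the shape ``$r_i^{p^{m+t-1}}\omega$''.

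The problem thus becomes: given the arbitrary Witt vectors $\delta_i=\omega_i-\omega_0$ for $i=1,\ldots,t$, find $\omega\in W_m(F)$ satisfying $\delta_i\equiv r_i^{p^{m+t-1}}\omega\pmod{\wp(W_m(F))}$. This is exactly a Witt-vector analogue of \Lref{WittVec}, which can be established by induction on $m$ using the exact sequence of \Tref{Exact} to reduce the Witt-vector congruences to the scalar case already covered by \Lref{WittVec}, with \Pref{depend} maintaining $\mathbb{F}_p$-independence at each stage. The main obstacle is carefully executing the Artin--Schreier--Witt computation of $\xi_i(\omega,r_i)$ and lifting the scalar argument through \Tref{Exact}; once these technical points are handled, \Lref{WittVec} supplies the required $\omega$ directly.
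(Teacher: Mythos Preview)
Your plan has the right endpoint but a genuine gap at its center, and it misses the simplification that makes the paper's argument work. The crucial vague step is the ``explicit Artin--Schreier--Witt computation'' that is supposed to turn $\omega\otimes(1-r_i^{p^{m+t-1}}/\beta)$ into $[\xi_i(\omega,r_i),\beta)$ with $\xi_i$ linear in $\omega$ modulo $\wp(W_m(F))$ and with leading term $r_i^{p^{m+t-1}}\omega$. You do not perform this computation, and there is no reason to expect such a clean formula: the passage from one second slot to another only determines $\xi_i$ modulo $\ker(-\otimes\beta)$, not modulo $\wp(W_m(F))$, so the reduction to the congruence $\delta_i\equiv r_i^{p^{m+t-1}}\omega \pmod{\wp(W_m(F))}$ is not justified. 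Moreover, it is unclear what the product $r_i^{p^{m+t-1}}\omega$ of a scalar with a Witt vector even means here; if it is Witt multiplication by a Teichm\"uller lift, the resulting system is not of the shape handled by \Lref{WittVec}.

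The paper avoids all of this by one change of viewpoint: instead of writing every $A_i$ with second slot $\beta$ and then trying to move to $\beta-r_i^{p^{m+t-1}}$, it writes each $A_i$ from the outset as $[\omega_i,\beta-r_i^{p^{m+t-1}})$ (which is legitimate since $F[\sqrt[p^m]{\beta}]=F[\sqrt[p^m]{\beta-r_i^{p^{m+t-1}}}]$). Then the only task is to make all the $\omega_i$ coincide while keeping the second slots fixed. This is done one Witt coordinate at a time using two ingredients: the scalar \Lref{WittVec} applied to the first coordinates $\rho_i$ to produce $\pi$ with $\rho_i-\rho_0\equiv r_i^{p^{t-1}}\pi\pmod{\wp(F)}$, and the symbol relation $[\omega,\gamma)=[\omega+(\gamma\pi^{p^m},0,\dots,0),\gamma)$, which modifies the first Witt slot by the second slot $\gamma$. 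Since the second slots differ exactly by $r_i^{p^{m+t-1}}$, this modification aligns all first slots simultaneously; iterating over the remaining slots finishes the proof. In other words, the paper's slot-by-slot procedure \emph{is} the inductive ``Witt-vector analogue of \Lref{WittVec}'' you were aiming for, carried out concretely and without the intermediate cohomological reformulation or the unproven computation of $\xi_i$.
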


\begin{proof}
Since $F[\sqrt[p^m]{\beta}]=F[\sqrt[p^m]{\beta-r_i^{p^{m+t-1}}}]$ is a splitting field of all the $A_i$'s, one can write $A_i=[\omega_i,\beta-r_i^{p^{m+t-1}})_{p^m,F}$ for each $i \in \{1,\dots,t\}$ and $A_0=[\omega_0,\beta)_{p^m,F}$ for some $\omega_0,\dots,\omega_t \in W_m(F)$.

Write $\rho_i$ for the first slot of the Witt vector $\omega_i$, and $\psi_i=\rho_i-\rho_0$ for each $i \in \{1,\dots,t\}$.
Then by Lemma \ref{WittVec}, there exists $\pi \in F$ for which $\psi_i \equiv r_i^{p^{t-1}} \pi \pmod{\wp(F)}$ for each $i$.
Then $\rho_0 \equiv \rho_i-r_i^{p^{t-1}} \pi \pmod{\wp(F)}$,
and by raising both sides to the power of $p^m$ we get $\rho_0^{p^m} \equiv \rho_i^{p^m}-r_i^{p^{m+t-1}} \pi^{p^m} \pmod{\wp(F)}$.
Therefore, $\rho_0^{p^m}+\beta \pi^{p^m} \equiv \rho_i^{p^m}+(\beta-r_i^{p^{m+t-1}}) \pi^{p^m} \pmod{\wp(F)}$.

Then by the symbol modifications $A_0=[\omega_0,\beta)_{p^m,F}=[\omega_0^{p^m},\beta)_{p^m,F}=[\omega_0^{p^m}+(\beta \pi^{p^m},0,\dots,0),\beta)_{p^m,F}$, and for each $i \in \{1,\dots,t\}$,
\begin{eqnarray*}
A_i&=&[\omega_i,\beta-r_i^{p^{m+t-1}})_{p^m,F}\\
&=&[\omega_i^{p^m},\beta-r_i^{p^{m+t-1}})_{p^m,F}\\
&=&[\omega_i^{p^m}+((\beta-r_i^{p^{m+t-1}})\pi^{p^m},0,\dots,0),\beta-r_i^{p^{m+t-1}})_{p^m,F}
\end{eqnarray*}
 we obtain symbol presentations where all the Witt vectors share the first slot.
This process can be repeated to modify the Witt vectors again so that they share the second slot too, while still sharing the first slot, and so on.
In the end, we obtain symbol presentations where the Witt vectors are identical.
\end{proof}

\begin{cor} \label{SuperAlbert}
For every finite number of symbol algebras $A_0,A_1,\dots,A_t$ of degrees $p^{m_0},\dots,p^{m_t}$ over a field $F$ of $\operatorname{char}(F)=p>0$, there exists a common cyclic splitting field of degree $p^{m_0+\dots+m_t}$ over $F$. 
\end{cor}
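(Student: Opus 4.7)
The plan is to reduce the statement to Theorem \ref{cyclic} by scaling every algebra up to a common degree $p^N$, where $N := m_0 + m_1 + \cdots + m_t$, and producing a common simple purely inseparable splitting field of exactly that degree. Once this alignment is achieved, Theorem \ref{cyclic} directly yields a common cyclic subfield of degree $p^N$ splitting every algebra.

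Concretely, I first apply Proposition \ref{inseparablecommon} to $A_0, A_1, \ldots, A_t$ to obtain a common simple purely inseparable splitting field $F[\sqrt[p^r]{\beta}]$ with $r \leq N$. After replacing $\beta$ with a representative of its class modulo $F^{\times p}$ so that $\beta \notin F^p$, the field $L := F[\sqrt[p^N]{\beta}]$ has degree exactly $p^N$ over $F$, contains $F[\sqrt[p^r]{\beta}]$ as a subfield, and consequently also splits every $A_i$. Next, for each $i$, the algebra $A_i \otimes_F \operatorname{M}_{p^{N - m_i}}(F)$ is central simple of degree $p^N$, Brauer equivalent to $A_i$, and admits $L$ as a purely inseparable maximal subfield of degree $p^N$; by Albert's theorem (\cite[Chapter VII, Theorem 28]{Albert:1968}), it is therefore expressible as a symbol algebra $B_i := [\omega_i, \beta)_{p^N, F}$ for some $\omega_i \in W_N(F)$. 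Finally, I apply Theorem \ref{cyclic} to the $t+1$ symbol algebras $B_0, B_1, \ldots, B_t$ of common degree $p^N$ sharing the splitting field $L$; this yields a single Witt vector $\omega \in W_N(F)$ such that $B_0 = [\omega, \beta)_{p^N, F}$, and the associated cyclic subfield $F_\omega$ has degree $p^N$ over $F$ and splits every $B_i$, hence, by Brauer equivalence, every $A_i$.

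The delicate point is the first step: the common purely inseparable splitting field produced by Proposition \ref{inseparablecommon} may a priori have degree $p^r$ with $r$ strictly less than $N$, whereas Theorem \ref{cyclic} demands equal degree $p^N$. The remedy is the elementary observation that $F[\sqrt[p^N]{\beta}]$ has degree $p^N$ as soon as $\beta$ is not a $p$-th power in $F$, which can always be arranged by extracting $p$-th powers from $\beta$; in the degenerate case $\beta \in F^p$ all the $A_i$ are already split and any cyclic $p^N$-extension of $F$ works. Once the degrees have been aligned, both Albert's theorem and Theorem \ref{cyclic} apply in their standard forms and the rest of the argument is routine.
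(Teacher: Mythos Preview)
Your proposal is correct and follows essentially the same approach as the paper's proof: apply Proposition~\ref{inseparablecommon} to get a common simple purely inseparable splitting field, pass to Brauer-equivalent symbol algebras of degree $p^N$ sharing that field, and invoke Theorem~\ref{cyclic}. You are in fact slightly more careful than the paper, which simply asserts that the purely inseparable splitting field has degree exactly $p^{m_0+\dots+m_t}$ without spelling out the adjustment (extracting $p$-th powers from $\beta$, then taking a higher root) that you make explicit.
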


\begin{proof}
By Proposition \ref{inseparablecommon}, these algebras share a simple purely inseparable splitting field of degree $p^{m_0+\dots+m_t}$. Thus, these algebras are Brauer equivalent to symbol algebras of degree $p^{m_0+\dots+m_t}$ sharing such a splitting field. By the previous theorem, these algebras share a cyclic splitting field of degree $p^{m_0+\dots+m_t}$.\end{proof}

In particular, this recovers the known fact that $A_0 \otimes \dots \otimes A_t$ is a cyclic algebra of degree $p^{m_0+\dots+m_t}$ (\cite[Chapter VII, Theorem 31]{Albert:1968}). In terms of cohomology, this means that 
the Brauer class of this tensor product is a single symbol in $\H_{p^m}^2(F)$ where $m=m_0+\dots+m_t$.

\begin{rem}
Theorem \ref{cyclic} generalizes the main theorem of \cite{Chapman:2015} which states that for any $\alpha,\beta \in F$ and $\gamma \in F^\times$, the algebras $[\alpha,\gamma)_{p,F}$ and $[\beta,\gamma)_{p,F}$ share a degree $p$ cyclic splitting field.
As demonstrated in that paper, the converse is not true in general. Indeed, using the same argument, one can easily show that if $F=\mathbb{F}_p(\!(\alpha)\!)(\!(\beta)\!)$, then $A=[(1,0\dots,0),\alpha)_{p^m,F}$ and $B=[(1,0,\dots,0),\beta)_{p^m,F}$ do not share a common simple purely inseparable splitting field of degree $p^m$.
\end{rem}

\begin{rem}
Theorem \ref{cyclic} does not extend to infinite sets of symbol algebras.
For example, if one considers $F=\mathbb{F}_p(\alpha)$, then there is no single cyclic degree $p$ extension that trivializes the entire group $\H_{p}^{2}(F)$, even though this group is trivialized by the inseparable extension $F[\sqrt[p]{\alpha}]$.
\end{rem}
\section{Symbol Length moving from one group to another}

As a result of Corollary \ref{sl1} , the symbol length in $\H_{p^\infty}^{n+1}(F)$ is 1. It is only natural to ask what happens in the opposite direction:
\begin{ques}
If $A$ is a class in $\H_{p^m}^{n+1}(F)$ which is represented by a single symbol when embedded into $\H_{p^t}^{n+1}(F)$ for $t \geq m$, what is the symbol length of $A$ in $\H_{p^m}^{n+1}(F)$?
\end{ques}

The following theorems provide upper bounds for the symbol length of $A$ in $\H_{p^m}^{n+1}(F)$ under the conditions described in Section \ref{section:norm}.

\begin{thm}[{\cite{Tignol:1983}}]\label{BrauerSymbol}
Suppose $F$ is a field of $\operatorname{char}(F)=p$.
Then for any symbol algebra $A=[\omega,\gamma) \in \H^2_{p^t}(F)$ of exponent dividing  $p^m$ with $m\leq t$, its symbol length in $\H_{p^m}^2(F)$ is at most $p^{t-m}$.
\end{thm}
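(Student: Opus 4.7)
The plan is to extract a norm condition from the exponent hypothesis and then exploit it via corestriction. By the exact sequence of \Tref{Exact} (in the form given by the Corollary following it), the hypothesis that $A$ has exponent dividing $p^m$ in $\H^2_{p^t}(F)$ is the statement that $\operatorname{Exp}^m_t(A) = p^m A$ vanishes in $\H^2_{p^{t-m}}(F)$. Using the explicit formula for $\operatorname{Exp}$ recalled in Section~2, this vanishing says exactly that the symbol $[(\omega_1,\dots,\omega_{t-m}),\gamma)_{p^{t-m},F}$ is split. Applying \Tref{Brauer} then gives
\[ \gamma = \norm_{L/F}(z) \quad\text{for some } z\in L^\times, \]
where $L := F_{(\omega_1,\dots,\omega_{t-m})}$ is the cyclic subfield of $F_\omega$ of degree $p^{t-m}$ over $F$.

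Next, I would apply the projection formula for Kato--Milne cohomology:
\[ A = \omega\cdot\gamma = \omega\cdot\operatorname{cor}_{L/F}(z) = \operatorname{cor}_{L/F}\bigl(\operatorname{res}_{L/F}(\omega)\cdot z\bigr). \]
Over $L$ the first $t-m$ entries of $\omega$ become Artin--Schreier trivial via the Witt-vector generators of $L$, so the class $\operatorname{res}_{L/F}(\omega)\in\H^1_{p^t}(L)$ has order dividing $p^m$ (equivalently, $[F_\omega:L]=p^m$); by the exactness of \Tref{Exact} it lifts uniquely through $\operatorname{Shift}^{t-m}_m$ to a class $\tilde\omega\in\H^1_{p^m}(L)$. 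Since corestriction commutes with the shift map and the shift is injective, the element $A'\in\H^2_{p^m}(F)$ with $\operatorname{Shift}^{t-m}_m(A')=A$---whose symbol length we wish to bound---is exactly
\[ A' = \operatorname{cor}_{L/F}\bigl([\tilde\omega,z)_{p^m,L}\bigr). \]

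It then remains to bound the symbol length of the corestriction of a single $p^m$-symbol across the cyclic extension $L/F$ of degree $p^{t-m}$ by $p^{t-m}$. My plan is induction on $t-m$: fix a tower $F=L_0\subset L_1\subset\dots\subset L_{t-m}=L$ of cyclic degree-$p$ Artin--Schreier subextensions and show, at each stage, that the corestriction of a single $p^m$-symbol across a cyclic degree-$p$ extension is a sum of at most $p$ symbols; composing the bounds at each of the $t-m$ levels produces the desired $p^{t-m}$. The main obstacle is precisely this last step. It requires a Scharlau-style transfer formula for characteristic-$p$ cyclic algebras in Witt-vector form: given $E\subset E'$ cyclic of degree $p$, one has to produce an explicit presentation of $\operatorname{cor}_{E'/E}\bigl([\tilde\omega,z)_{p^m,E'}\bigr)$ as a sum of at most $p$ symbol algebras over $E$, obtained by manipulating the symbol-algebra generators $\vec\theta$ and $y$ via the Galois automorphism of $E'/E$---the characteristic-$p$ Witt-vector analogue of the classical quadratic-extension corestriction formula for quaternion algebras.
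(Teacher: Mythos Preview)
Your approach is essentially the same as the paper's, which does not give a self-contained argument but simply notes that Tignol's original proof in the case $\operatorname{char}(F)\neq p$ carries over verbatim once the appropriate corestriction formula is available in characteristic $p$; that formula is supplied by Mammone--Merkurjev \cite{MammoneMerkurjev:1991}. The step you single out as the ``main obstacle''---that the corestriction of a single $p^m$-symbol across a cyclic degree-$p$ extension is a sum of at most $p$ symbols over the base---is precisely the content of that reference, so your reconstruction of Tignol's strategy is accurate and the gap you flag is already filled in the literature.
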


\begin{proof}
The original theorem was stated in the case of $\operatorname{char}(F) \neq p$ when $F$ contains primitive $p^t$th roots of unity.
However, the original proof translates smoothly to our case.
The required result on the corestriction used in the original proof was simply published a few years later in \cite{MammoneMerkurjev:1991}.
\end{proof}

\begin{cor}\label{TignolBound}
Suppose $F$ is a field of $\operatorname{char}(F)=p$.
Then for any central simple $F$-algebra whose exponent is $p^m$ and whose symbol length in $\H_{p^\ell}^2(F)$ for some $\ell \geq m$ is $r$, its symbol length in $\H_{p^m}^2(F)$ is at most $p^{r\ell-m}$.
\end{cor}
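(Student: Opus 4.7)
The plan is to collapse the $r$-term symbol expression into a single symbol in a suitably larger Kato--Milne cohomology group, and then to apply Theorem \ref{BrauerSymbol} just once. Let $A$ be a central simple $F$-algebra of exponent $p^m$ whose Brauer class, viewed as an element of $\H_{p^\ell}^2(F)$, admits an expression $A = A_1 + \dots + A_r$ with each $A_i = [\omega_i, \gamma_i)_{p^\ell, F}$.

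First I would invoke Corollary \ref{SuperAlbert} applied to the $r$ symbol algebras $A_1, \dots, A_r$ (all of degree $p^\ell$): they share a common cyclic splitting field of degree $p^{r\ell}$ over $F$. By the remark immediately following that corollary, this forces the tensor product $A_1 \otimes_F \dots \otimes_F A_r$ to be Brauer equivalent to a single symbol algebra of degree $p^{r\ell}$. Since this tensor product represents the class $A$ in the Brauer group, we obtain a presentation of $A$ as a single symbol in $\H_{p^{r\ell}}^2(F)$.

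Finally I would apply Theorem \ref{BrauerSymbol} with $t = r\ell$: since $A$ has exponent $p^m$ and $m \leq \ell \leq r\ell$, its symbol length in $\H_{p^m}^2(F)$ is bounded by $p^{r\ell - m}$, giving the claimed inequality. I do not expect any real obstacle here; the one subtle point worth highlighting is that the individual summands $A_i$ need not have exponent dividing $p^m$ (only their sum does), so Tignol's theorem cannot be applied termwise to the $r$ symbols and the bounds added. The essential idea is to fuse the $r$ symbols into a single one at level $p^{r\ell}$ using the common cyclic splitting field from Corollary \ref{SuperAlbert}, and only then to invoke the exponent-based bound of Theorem \ref{BrauerSymbol}.
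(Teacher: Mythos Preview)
Your proposal is correct and matches the paper's own argument essentially verbatim: fuse the $r$ symbols into one symbol of degree $p^{r\ell}$ via Corollary~\ref{SuperAlbert}, then apply Theorem~\ref{BrauerSymbol} once with $t=r\ell$. Your added remark that Tignol's bound cannot be applied termwise (since the individual $A_i$ need not have exponent dividing $p^m$) is a nice clarification the paper leaves implicit.
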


\begin{proof}
Follows from the previous theorem  and  by the fact that a tensor product of $r$ symbols in $\H_{p^\ell}^2(F)$ is a single symbol in $\H_{p^{r\ell}}^2(F)$ (Corollary \ref{SuperAlbert}).
\end{proof}

\begin{lem}\label{commonslotchar2}
\
\begin{enumerate}
\item Given Witt vectors $\alpha=(\alpha_1,\dots,\alpha_{m+1})$ and $\gamma=(\gamma_1,\dots,\gamma_{m+1})$ in $W_{m+1}(F)$, and $\beta \in F^\times$, if $[\alpha_1,\beta)_{p,F}=[\gamma_1,\beta)_{p,F}$, then $[\alpha,\beta)_{p^{m+1},F}^{\operatorname{op}} \otimes [\gamma,\beta)_{p^{m+1},F}$ is Brauer equivalent to a single symbol in $\H_{p^m}^2(F)$.
\item Given a Witt vector $\alpha=(\alpha_1,\dots,\alpha_{m+1})$ and $\beta,\delta \in F^\times$, if $[\alpha_1,\beta)_{p,F}=[\alpha_1,\delta)_{p,F}$, then $[\alpha,\beta)_{p^{m+1},F}^{\operatorname{op}} \otimes [\alpha,\delta)_{p^{m+1},F}$ is of symbol length at most $p$ in $\H_{p^m}^2(F)$.
\end{enumerate}
\end{lem}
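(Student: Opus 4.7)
The two assertions share a common first step. Under the identification of $[\cdot,\cdot)^{\operatorname{op}}$ with negation in cohomology, the tensor products correspond to single symbols in $\H^2_{p^{m+1}}(F)$: namely $[\gamma \ominus \alpha, \beta)_{p^{m+1}}$ in part (1) and $[\alpha, \delta/\beta)_{p^{m+1}}$ in part (2). In both cases the hypothesis translates to the vanishing in $\H^2_p(F)$ of the ``first Witt slot times second slot" class, i.e.\ $\operatorname{Exp}_{m+1}^m$ sends the class to $0$. By the exactness of Theorem \ref{Exact}, the class has exponent dividing $p^m$ and lies in the image of $\operatorname{Shift}_m^1$.

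Part (2) then follows directly from Tignol's Theorem \ref{BrauerSymbol}: a single symbol in $\H^2_{p^{m+1}}(F)$ of exponent dividing $p^m$ has symbol length at most $p^{(m+1)-m}=p$ in $\H^2_{p^m}(F)$, which is exactly the claim.

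For part (1), to sharpen Tignol's bound from $p$ to $1$, my plan is to construct a symbol representative explicitly. When $\gamma_1 - \alpha_1 \in \wp(F)$, say $\gamma_1 - \alpha_1 = \wp(c_0)$, the Witt vector $\xi := (c_0, 0, \ldots, 0) \in W_{m+1}(F)$ has $\wp(\xi)$ with first slot $\wp(c_0)$; subtracting $\wp(\xi)$ from $\gamma \ominus \alpha$ preserves the cohomology class (since $\wp(\xi) \otimes \beta = 0$) and zeroes out the first slot, exhibiting $[\gamma \ominus \alpha, \beta)_{p^{m+1}}$ as the shift of a single symbol in $\H^2_{p^m}(F)$. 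When $\gamma_1 - \alpha_1 \notin \wp(F)$, Theorem \ref{Brauer} applied to the hypothesis provides $v \in L := F(\theta)$, where $\wp(\theta) = \gamma_1 - \alpha_1$, such that $\beta = N_{L/F}(v)$. The projection formula then rewrites the class as $\operatorname{cor}_{L/F}$ of the single symbol $[(\gamma \ominus \alpha)_L, v)_{p^{m+1},L}$ in $\H^2_{p^{m+1}}(L)$; over $L$ the first slot lies in $\wp(L)$, so the previous case reduces the inner term to a single symbol in $\H^2_{p^m}(L)$.

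The main obstacle is the final descent: the corestriction of a single symbol from the degree-$p$ Artin--Schreier extension $L/F$ is a priori only a sum of up to $p$ symbols in $\H^2_{p^m}(F)$. The plan to collapse this sum is to invoke Theorem \ref{cyclic} on these symbols, using that they inherit a shared second-slot datum from the common value $\beta$ in the hypothesis of part (1). This shared datum forces the summands to admit a common cyclic maximal subfield, allowing them to be recombined into a single symbol in $\H^2_{p^m}(F)$, which is the content of the assertion.
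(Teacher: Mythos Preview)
Your treatment of part~(2) is correct and is essentially the paper's argument: both reduce to the single $p^{m+1}$-symbol $[\alpha,\delta\beta^{-1})_{p^{m+1}}$, observe its exponent divides $p^m$, and invoke Tignol's Theorem~\ref{BrauerSymbol}.

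For part~(1), Case~B has a genuine gap. After passing to $L=F(\theta)$ and reducing to a single symbol $[\pi,v)_{p^m,L}$, both the Witt vector $\pi$ (which now involves $\theta$) and the slot $v$ lie in $L\setminus F$, so no projection-formula shortcut applies and the corestriction is not presented as any explicit sum of symbols over $F$. Your appeal to Theorem~\ref{cyclic} is unjustified: that theorem requires as \emph{input} that the algebras in question share a common simple purely inseparable splitting field of degree $p^m$, and you have not produced one. The phrase ``shared second-slot datum from $\beta$'' does not supply it---after your reduction the second slot is $v$, not $\beta$, and in any case a shared second slot is already what Theorem~\ref{cyclic} \emph{concludes} (a common cyclic subfield), not what it assumes.

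The paper avoids all of this by exploiting what the hypothesis actually says. The equality $[\alpha_1,\beta)_{p,F}=[\gamma_1,\beta)_{p,F}$ is stronger than ``$(\gamma_1-\alpha_1)\otimes\beta=0$'': it gives the explicit expression
\[
\gamma_1-\alpha_1=\wp(\lambda)+c_1^p\beta+\dots+c_{p-1}^p\beta^{p-1}
\]
for some $\lambda,c_1,\dots,c_{p-1}\in F$. The paper then subtracts from $\gamma-\alpha$ the Witt vectors $(\lambda^p,0,\dots,0)-(\lambda,0,\dots,0)$ and each $(c_i^p\beta^i,0,\dots,0)$; these subtractions do not change the class $[\,\cdot\,,\beta)_{p^{m+1}}$, and the resulting Witt vector has first slot zero, hence equals $\operatorname{Shift}_m^1(\pi)$ for some $\pi\in W_m(F)$, yielding the single symbol $[\pi,\beta)_{p^m,F}$ directly over $F$. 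There is no case distinction and no auxiliary extension. Your Case~A is exactly this argument in the degenerate situation $c_1=\dots=c_{p-1}=0$; what you are missing is that the additional terms $c_i^p\beta^i$ can be absorbed over $F$ using the defining relations of $\H^2_{p^{m+1}}$, so Case~B never arises.
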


\begin{proof}
For the first statement, if $[\alpha_1,\beta)_{p,F}=[\gamma_1,\beta)_{p,F}$ then $\gamma_1=\alpha_1+\lambda^p-\lambda+c_1^p \beta+\dots+c_{p-1}^p \beta^{p-1}$ for some $\lambda,c_1,\dots,c_{p-1} \in F$. Therefore, $[\gamma,\beta)_{p^{m+1},F} \otimes [-\alpha,\beta)_{p^{m+1},F}=[\gamma-\alpha,\beta)_{p^{m+1},F}=
[\gamma-\alpha-(\lambda^p,0,\dots,0)+(\lambda,0,\dots,0)-(c_1^p \beta,0,\dots,0)-\dots-(c_{p-1}^p \beta^{p-1},0,\dots,0),\beta)_{p^{m+1},F}$.
Since the first slot of $\gamma-\alpha-(\lambda^p,0,\dots,0)+(\lambda,0,\dots,0)-(c_1^p \beta,0,\dots,0)-\dots-(c_{p-1}^p \beta^{p-1},0,\dots,0)$ is 0, this Witt vector is $\operatorname{Shift}_m^1(\pi)$ for some $\pi \in W_m(F)$.
Therefore,  $[\gamma,\beta)_{p^{m+1},F} \otimes [-\alpha,\beta)_{p^{m+1},F} \sim_{\operatorname{Br}} [\pi,\beta)_{p^m,F}$.

For the second statement, if $[\alpha_1,\beta)_{p,F}=[\alpha_1,\delta)_{p,F}$, then $\delta=\beta \cdot \norm_{K/F}(f)$ for some $f\in K=F_{\alpha_1}$. Therefore, $[\alpha,\delta)_{p^{m+1},F}=[\alpha,\beta \cdot \norm_{K/F}(f))_{p^{m+1},F} \sim_{\operatorname{Br}} [\alpha,\norm_{K/F}(f))_{p^{m+1},F} \otimes [\alpha,\beta)_{p^{m+1},F}$.
Since $[\alpha,\norm_{K/F}(f))_{p^{m+1},F}$ is of exponent dividing $p^m$, its symbol length in $\operatorname{Br}_{p^m}(F)$ is bounded from above by $p$ by Theorem \ref{BrauerSymbol}.
\end{proof}

\begin{cor}\label{Small}
\
\begin{itemize}
\item[a.] If $\operatorname{char}(F)=2$ then any class in $\operatorname{Br}_{2^m}(F)$ of symbol length at most 2 in $\operatorname{Br}_{2^{m+1}}(F)$ has symbol length at most 4 in $\operatorname{Br}_{2^m}(F)$.
\item[b.] If $\operatorname{char}(F)=3$ then any class in $\operatorname{Br}_{3^m}(F)$ of symbol length at most 2 in $\operatorname{Br}_{3^{m+1}}(F)$ has symbol length at most $11$ in $\operatorname{Br}_{3^m}(F)$
\end{itemize}
\end{cor}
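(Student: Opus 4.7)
The plan is to use the exact sequence from Theorem \ref{Exact} together with chain lemmas for degree-$p$ symbol algebras, reducing the problem to repeated applications of Lemma \ref{commonslotchar2}. By hypothesis $A$ has exponent dividing $p^m$ and, viewed inside $\operatorname{Br}_{p^{m+1}}(F)$, admits a presentation
$$A = [\alpha,\beta)_{p^{m+1},F} + [\gamma,\delta)_{p^{m+1},F}$$
for some $\alpha,\gamma \in W_{m+1}(F)$ and $\beta,\delta \in F^\times$. Since $A$ lies in the image of $\operatorname{Shift}_m^1$, applying $\operatorname{Exp}_{m+1}^m$ yields $[\alpha_1,\beta)_{p,F} + [\gamma_1,\delta)_{p,F} = 0$ in $\operatorname{Br}_p(F)$, where $\alpha_1, \gamma_1$ denote the first Witt slots. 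Equivalently, $[\alpha_1,\beta)_{p,F}$ and $[-\gamma_1,\delta)_{p,F}$ represent the same degree-$p$ Brauer class.

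Next I would connect these two degree-$p$ symbols by a chain $R_0 = [\alpha_1,\beta)_p, R_1, \ldots, R_k = [-\gamma_1,\delta)_p$ in $\operatorname{Br}_p(F)$ where each consecutive pair shares either a Witt slot or a second slot. For $p = 2$, Albert's common-slot lemma for quaternion algebras in characteristic two produces such a chain of length $k = 3$ with the pattern (second, Witt, second). For $p = 3$, the characteristic-three analog of Rowen's chain lemma for degree-three cyclic algebras, as exploited in \cite{Matzri:2014}, supplies a chain of length $k = 5$ with the alternating pattern (Witt, second, Witt, second, Witt). I then lift each intermediate $R_i = [a_i, b_i)_{p,F}$ to $T_i = [\tilde a_i, b_i)_{p^{m+1},F}$ with $\tilde a_i \in W_{m+1}(F)$ any Witt extension of $a_i$, and set $T_0 = [\alpha,\beta)_{p^{m+1},F}$ and $T_k = -[\gamma,\delta)_{p^{m+1},F}$. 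Telescoping gives
$$A = T_0 - T_k = (T_0 - T_1) + (T_1 - T_2) + \cdots + (T_{k-1} - T_k),$$
and each summand $T_i - T_{i+1}$ is a difference of two $p^{m+1}$-symbols whose reductions to $\operatorname{Br}_p(F)$ coincide by construction.

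Lemma \ref{commonslotchar2}(1) then applies whenever the shared slot is the second one, yielding a single $p^m$-symbol, while Lemma \ref{commonslotchar2}(2) applies whenever the shared slot is the Witt vector, yielding at most $p$ symbols in $\operatorname{Br}_{p^m}(F)$. Summing contributions: for $p = 2$ the three differences give $1 + 2 + 1 = 4$, and for $p = 3$ the five differences give $3 + 1 + 3 + 1 + 3 = 11$, matching exactly the bounds claimed. The main obstacle is invoking the correct chain lemma in characteristic $p$: for $p = 2$ the common-slot lemma is classical, but for $p = 3$ one must verify that a characteristic-three version of Rowen's chain lemma can be arranged with the specific alternating pattern starting and ending with a shared Witt slot, since that pattern is what produces precisely the bound $11$ rather than a worse number such as $13$ or $15$.
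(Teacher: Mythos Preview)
Your argument is correct and follows the same route as the paper's proof: pass to $\operatorname{Br}_p(F)$ via $\operatorname{Exp}_{m+1}^m$, link the two resulting degree-$p$ symbols by the chain lemma (Draxl for $p=2$; the characteristic-$3$ chain lemma of \cite{MatzriVishne:2014} for $p=3$, with the two possible end-patterns handled as you anticipate), lift to $\operatorname{Br}_{p^{m+1}}(F)$, telescope, and apply Lemma~\ref{commonslotchar2} stepwise to obtain $1+2+1=4$ and $3+1+3+1+3=11$. The only imprecision is the phrase ``any Witt extension'': since Lemma~\ref{commonslotchar2}(2) requires the \emph{same} Witt vector $\alpha$ in both symbols, you must choose the lifts $T_i$ coherently---taking $T_1=[\alpha,\cdot)$ and $T_{k-1}=[-\gamma,\cdot)$ at the Witt-shared endpoint steps in the $p=3$ case, and a common lift $(\epsilon,0,\dots,0)$ for the middle Witt-shared step when $p=2$---which is exactly what the paper does.
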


\begin{proof}
Let $\alpha=(\alpha_1,\dots,\alpha_{m+1})$ and $\gamma=(\gamma_1,\dots,\gamma_{m+1})$ be two Witt vectors in $W_{m+1}(F)$.
Suppose $\operatorname{char}(F)=2$.
If $[\alpha,\beta)_{2^{m+1},F} \otimes [\gamma,\delta)_{2^{m+1},F}$ is of exponent dividing $2^m$, the algebra $[\alpha_1,\beta)_{2,F} \otimes [\gamma_1,\delta)_{2,F}$ is split, which means $[\gamma_1,\delta)_{2,F}=[\alpha_1,\beta)_{2,F}$.
By the chain lemma for quaternion algebras (\cite[Section 14, Theorem 7]{Draxl:1983}), there exist $\epsilon \in F$ such that
$$[\gamma_1,\delta)_{2,F}=[\epsilon,\delta)_{2,F}=[\epsilon,\beta)_{2,F}=[\alpha_1,\beta)_{2,F}.$$
By Lemma \ref{commonslotchar2}, $[\alpha,\beta)_{2^{m+1},F} \otimes [(\epsilon,0,\dots,0),\beta)_{2^{m+1},F}^{\operatorname{op}}$ is of symbol length 1 in $\operatorname{Br}_{2^m}(F)$, the algebra $[(\epsilon,0,\dots,0),\delta)_{2^{m+1},F} \otimes [(\epsilon,0,\dots,0),\beta)_{2^{m+1},F}^{\operatorname{op}}$ is of symbol length at most 2 and $[(\epsilon,0,\dots,0),\delta)_{2^{m+1},F} \otimes [\gamma,\delta)_{2^{m+1},F}$ of symbol length 1.
Therefore, $[\alpha,\beta)_{2^{m+1},F} \otimes [\gamma,\delta)_{2^{m+1},F}$ is of symbol length at most 4 in $\operatorname{Br}_{2^m}(F)$.

Suppose now $\operatorname{char}(F)=3$. If $[\alpha,\beta)_{3^{m+1},F} \otimes [\gamma,\delta)_{3^{m+1},F}$ is of exponent dividing $3^m$, the algebra $[\alpha_1,\beta)_{3,F} \otimes [\gamma_1,\delta)_{3,F}$ is split, which means $[\gamma_1,\delta)_{3,F}=[-\alpha_1,\beta)_{3,F}$.
By the chain lemma for symbol algebras of degree 3 (see \cite[Corollary 7.2]{MatzriVishne:2014}), there exist $a,c \in F^\times$ and $b\in F$ such that either
$$[\gamma_1,\delta)_{3,F}=[\gamma_1,a)_{3,F}=[b,a)_{3,F}=[b,c)_{3,F}=[-\alpha_1,c)_{3,F}=[-\alpha_1,\beta)_{3,F}, \ \text{or}$$
$$[\gamma_1,\delta)_{3,F}=[\gamma_1,a)_{3,F}=[b,a)_{3,F}=[b,c)_{3,F}=[\alpha_1,c)_{3,F}=[\alpha_1,\beta^{-1})_{3,F}.$$
We continue with the first case, the proof of the second case is similar.
By Lemma \ref{commonslotchar2}, each of the following algebras $[\gamma,\delta)_{3^{m+1},F}\otimes [\gamma,a)_{3^{m+1},F}^{\operatorname{op}}$, $[(b,0,\dots,0),a)_{3^{m+1},F} \otimes [(b,0,\dots,0),c)_{3^{m+1},F}^{\operatorname{op}}$, and $[-\alpha,c)_{3^{m+1},F} \otimes [-\alpha,\beta)_{3^{m+1},F}^{\operatorname{op}}$ is of symbol length at most 3 in $\operatorname{Br}_{3^m}(F)$,
and each of the following algebras 
$[\gamma,a)_{3^{m+1},F}\otimes [(b,0,\dots,0),a)_{3^{m+1},F}^{\operatorname{op}}$ and $[(b,0,\dots,0),c)_{3^{m+1},F}\otimes [-\alpha,c)_{3^{m+1},F}^{\operatorname{op}}$ is of symbol length at most 1.
Hence, $[\alpha,\beta)_{3^{m+1},F} \otimes [\gamma,\delta)_{3^{m+1},F}$ is of symbol length at most $3\cdot 3+2=11$.
\end{proof}

Note that Corollary \ref{Small} is a considerable improvement to the previously known bound, i.e., the bound appearing in Corollary \ref{TignolBound}. For example, when $r=2$ and $\ell=m+1$, Corollary \ref{TignolBound} gives the upper bound $2^{2(m+1)-m}=2^{m+1}$, whereas Corollary \ref{Small} gives 4.

\begin{thm}
Suppose $F$ is a field of $\operatorname{char}(F)=p$.
Then for any symbol $A$ in $\H_{p^m}^3(F)$ of exponent $p^{m-1}$, its symbol length in $\H_{p^{m-1}}^3(F)$ is at most $\binom{m+1+p^2}{2}$.
\end{thm}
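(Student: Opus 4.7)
My plan is to apply Gille's Theorem~\ref{GilleTheorem} to convert the exponent condition into an explicit reduced-norm expression for $\gamma$, and then combine Witt-vector identities with the projection formula to land the symbol in $\H_{p^{m-1}}^3(F)$. Write $A = \omega \otimes \beta \otimes \gamma$ with $\omega = (a_1, \dots, a_m) \in W_m(F)$ and $\beta, \gamma \in F^\times$; the hypothesis that the exponent of $A$ divides $p^{m-1}$ is equivalent, by the exactness in Theorem~\ref{Exact}, to $\operatorname{Exp}_m^{m-1}(A) = (a_1) \otimes \beta \otimes \gamma = 0$ in $\H_p^3(F)$. Theorem~\ref{GilleTheorem} then yields $\gamma = \operatorname{Nrd}_D(x)$ for some $x \in D^\times$, where $D = [(a_1), \beta)_{p, F}$ is the degree-$p$ cyclic algebra representing $(a_1) \otimes \beta$. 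Expanding $x = \sum_{i, j = 0}^{p-1} c_{ij}\, \theta^i y^j$ in the standard $F$-basis of $D$ (with $\theta^p - \theta = a_1$ and $y^p = \beta$) makes $\operatorname{Nrd}_D(x)$ an explicit polynomial of total degree $p$ in the $p^2$ coefficients $c_{ij}$, which after grouping by powers of $\beta$ factors as $\gamma = R_0 \cdot (1 + \beta S_1 + \dots + \beta^{p-1} S_{p-1})$, with $R_0 = N_{K/F}(u_0)$ a norm from $K = F(\theta)$ for $u_0 = \sum_i c_{i, 0}\, \theta^i$ (one arranges $R_0 \neq 0$ by a generic choice of $x$).

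Next I would split $A$ via bilinearity in the third slot as $A = \omega \otimes \beta \otimes R_0 + \omega \otimes \beta \otimes (1 + \beta T)$, where $T = S_1 + \beta S_2 + \dots \in F$. For the $R_0$-term, the projection formula gives $\omega \otimes \beta \otimes N_{K/F}(u_0) = \operatorname{cor}_{K/F}(\omega_K \otimes \beta \otimes u_0)$. Over $K$, since $\wp(\theta) = a_1$, subtracting $(\theta, 0, \dots, 0)^p - (\theta, 0, \dots, 0) \in \wp(W_m(K))$ from $\omega_K$ yields a Witt vector with zero first coordinate, so that $\omega_K \otimes \beta \otimes u_0$ lies inside $\operatorname{Shift}_{m-1}^1(\H_{p^{m-1}}^3(K))$; corestricting then returns a short sum of symbols in $\H_{p^{m-1}}^3(F)$. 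For the $(1 + \beta T)$-piece, I iterate: peel off one factor of $\beta$ at a time, applying an analogous projection-formula argument at each stage to the partial sums, so that after at most $p$ such steps the entire expression has been placed in $\H_{p^{m-1}}^3(F)$.

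The hardest part will be the combinatorial accounting that delivers the exact binomial bound. One must track how the roughly $p^2$ distinct terms from the determinantal expansion of $\operatorname{Nrd}_D(x)$ amplify through corestriction from $K/F$ and through the $m - 1$ successive Witt-vector slot adjustments in the iterated reductions. Intermediate $\H^2$-classes of higher exponent that arise along the way will be controlled using Tignol's Theorem~\ref{BrauerSymbol}. I expect the final count to arrange as a pairing of the $p^2$ coefficient degrees of freedom against the $m + 1$ positional indices, summed triangularly, producing the asserted $\binom{m + 1 + p^2}{2}$. The principal subtlety will be in verifying that neither the corestrictions nor the $\beta$-peeling steps inflate the symbol count beyond this value.
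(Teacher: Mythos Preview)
Your opening move---applying Gille's theorem to write $\gamma=\operatorname{Nrd}_D(x)$ with $x=\sum_{i,j}c_{ij}\theta^iy^j$---matches the paper exactly. After that, however, the two arguments diverge completely, and your direct-manipulation route has real gaps.

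The paper does \emph{not} expand the reduced norm or manipulate symbols at all. Instead it observes that once $\gamma$ is expressed as a reduced norm in the $p^2$ coefficients $c_{ij}$, the entire symbol $A=\omega\otimes\beta\otimes\gamma$ descends to the field $E(\omega_2,\dots,\omega_m)$ where $E=\mathbb{F}_p(\omega_1,\beta,c_{00},\dots,c_{p-1,p-1})$. This field has $p$-rank at most $m+1+p^2$, and the bound $\binom{m+1+p^2}{2}$ is then read off directly from \cite[Corollary~3.4]{ChapmanMcKinnie:2020}, which bounds symbol length in $\H^{n+1}_{p^{m-1}}$ by $\binom{p\text{-rank}}{n}$. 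So the specific binomial is not a combinatorial accident of symbol-peeling; it is literally $\binom{\text{transcendence degree}}{2}$.

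Your approach, by contrast, runs into trouble at several points. First, the factorization $\gamma=R_0\cdot(1+\beta S_1+\dots+\beta^{p-1}S_{p-1})$ with $R_0=N_{K/F}(u_0)$ is not the shape of the reduced norm for $p>2$: the norm of $u_0+u_1y+\dots+u_{p-1}y^{p-1}$ has cross terms mixing the $u_j$, not a clean $\sum_j\beta^j N_{K/F}(u_j)$. Second, the ``peel off one factor of $\beta$'' iteration is not backed by any identity you state; there is no evident norm structure in $1+\beta T$ to which the projection formula applies. Third, invoking Tignol's Theorem~\ref{BrauerSymbol} is a red herring here---it controls symbol length in $\H^2$, not in $\H^3$, and nothing in your outline explains how an $\H^2$ bound would feed into the $\H^3$ count. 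Finally, the counting paragraph is a hope rather than an argument: you say you ``expect'' the terms to arrange triangularly, but give no mechanism. The paper's descent-to-small-$p$-rank trick sidesteps every one of these difficulties.
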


\begin{proof}
Write $A=\omega \otimes \beta \otimes \gamma$.
Since the exponent of $A$ is $p^{m-1}$, the symbol $\omega_1 \otimes \beta \otimes \gamma$ is trivial, which means, by Theorem \ref{GilleTheorem}, that $\gamma$ is in the image of the map $\operatorname{Nrd}: [\omega_1,\beta)_{p,F} \rightarrow F$.
The algebra $[\omega_1,\beta)_{p,F}$ is spanned as an $F$-vector space by the elements $\theta^iy^j$ for $i,j \in \{0,\dots,p-1\}$, where $\theta$ and $y$ satisfy $\theta^p-\theta=\omega_1$, $y^p=\beta$ and $y \theta y^{-1}=\theta+1$. 
Therefore there exist $x_{0,0},\dots,x_{p-1,p-1} \in F$ for which $\gamma=\operatorname{Nrd}(\sum_{i,j=0}^{p-1} x_{i,j} \theta^i y^j)$, hence $\gamma \in E=\mathbb{F}_p(\omega_1,\beta,x_{0,0},\dots,x_{p-1,p-1})$.
Consequently, $A$ descends to the symbol $A'=\omega \otimes \beta \otimes \gamma$ in $\H_{p^m}^3(E(\omega_2,\dots,\omega_m))$ whose exponent is $p^{m-1}$ because its $p^{m-1}$th power satisfies the norm condition for being split.
Therefore by \cite[Corollary 3.4]{ChapmanMcKinnie:2020}, the symbol length of $A'$ in $\H_{p^{m-1}}^3(E(\omega_2,\dots,\omega_m))$ is at most $\binom{m+1+p^2}{2}$, and therefore the symbol length of $A$ in $\H_{p^{m-1}}^3(F)$ is at most $\binom{m+1+p^2}{2}$.
\end{proof}

\begin{thm}
Suppose $F$ is a field of $\operatorname{char}(F)=2$.
Then for any symbol $A$ in $\H_{2^m}^{n+1}(F)$ of exponent $2^{m-1}$, its symbol length in $\H_{2^{m-1}}^{n+1}(F)$ is at most $\binom{m+n-1+2^n}{n}$.
\end{thm}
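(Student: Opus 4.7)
The plan is to adapt the proof of the preceding theorem, substituting Theorem \ref{QuadraticNorm} for Theorem \ref{GilleTheorem} to obtain the analogous norm condition in characteristic $2$ for arbitrary $n$. I would write $A=\omega\otimes\beta_1\otimes\dots\otimes\beta_n$ with $\omega=(\omega_1,\dots,\omega_m)\in W_m(F)$. Since the exponent of $A$ divides $2^{m-1}$, we have $\operatorname{Exp}_m^{m-1}(A)=\omega_1\otimes\beta_1\otimes\dots\otimes\beta_n=0$ in $\H_2^{n+1}(F)$. Viewing this as the vanishing of $(\omega_1\otimes\beta_1\otimes\dots\otimes\beta_{n-1})\otimes\beta_n$, Theorem \ref{QuadraticNorm} asserts that $\beta_n$ is represented by the quadratic $n$-fold Pfister form $\langle\!\langle \beta_1,\dots,\beta_{n-1},\omega_1]\!]$. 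Since this form has rank $2^n$, there exist $x_1,\dots,x_{2^n}\in F$ such that $\beta_n$ is an explicit polynomial expression in $\omega_1,\beta_1,\dots,\beta_{n-1},x_1,\dots,x_{2^n}$.

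Next, set $E=\mathbb{F}_2(\omega_1,\beta_1,\dots,\beta_{n-1},x_1,\dots,x_{2^n})$. Then $\beta_n\in E$, and $A$ descends to a symbol $A'=\omega\otimes\beta_1\otimes\dots\otimes\beta_n\in \H_{2^m}^{n+1}(E(\omega_2,\dots,\omega_m))$ whose exponent still divides $2^{m-1}$: representation of $\beta_n$ by the Pfister form over $E$ forces $\langle\!\langle \beta_n\rangle\!\rangle\otimes\langle\!\langle \beta_1,\dots,\beta_{n-1},\omega_1]\!]$ to be hyperbolic, so $\operatorname{Exp}_m^{m-1}(A')=0$ in $\H_2^{n+1}(E(\omega_2,\dots,\omega_m))$. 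The field $E(\omega_2,\dots,\omega_m)$ is generated over $\mathbb{F}_2$ by $1+(n-1)+2^n+(m-1)=m+n-1+2^n$ elements. Applying the appropriate general-$n$ analog of \cite[Corollary 3.4]{ChapmanMcKinnie:2020}, which bounds the symbol length in $\H_{2^{m-1}}^{n+1}$ of a field generated by $d$ elements over its prime subfield by $\binom{d}{n}$, then yields the stated bound $\binom{m+n-1+2^n}{n}$.

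The main obstacle is securing this generalized symbol length estimate: the preceding theorem cites the $n=2$ case of \cite[Corollary 3.4]{ChapmanMcKinnie:2020}, and the general-$n$ version would either have to be already available in the literature under another guise, or to be established by essentially the same transcendence-degree argument that produces the $n=2$ bound. Everything else — rearranging the symbol so that $\beta_n$ occupies the last slot, counting generators, and confirming the exponent descent via the Pfister form characterization — is routine given the tools already assembled in Section \ref{section:norm} and the preceding proofs.
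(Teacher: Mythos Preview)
Your argument is essentially identical to the paper's: write $A=\omega\otimes\beta_1\otimes\dots\otimes\beta_n$, use Theorem~\ref{QuadraticNorm} to express $\beta_n$ as a value of the Pfister form $\langle\!\langle \beta_1,\dots,\beta_{n-1},\omega_1]\!]$, descend to $E(\omega_2,\dots,\omega_m)$ with $E=\mathbb{F}_2(\omega_1,\beta_1,\dots,\beta_{n-1},x_1,\dots,x_{2^n})$, and invoke \cite[Corollary~3.4]{ChapmanMcKinnie:2020}. The only cosmetic difference is that the paper phrases the final count via the $2$-rank of $E(\omega_2,\dots,\omega_m)$ rather than the number of generators, and it cites \cite[Corollary~3.4]{ChapmanMcKinnie:2020} directly for general $n$, so your ``main obstacle'' is not in fact an obstacle.
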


\begin{proof}
Write $A=\omega \otimes \beta_1 \otimes \dots \otimes \beta_n$.
Since the exponent of $A$ is $2^{m-1}$, the symbol $\omega_1 \otimes \beta_1 \otimes \dots \otimes \beta_n$ is trivial, which means, by Theorem \ref{QuadraticNorm}, that $\beta_n$ is represented by $\varphi$ where $\varphi=\langle \! \langle \beta_1,\dots,\beta_{n-1},\omega_1 ] \! ]$.
Therefore there exist $x_1,\dots,x_{2^n} \in F$ for which $\beta_n=\varphi(x_1,\dots,x_{2^n})$, hence $\beta_n \in E=\mathbb{F}_2(\omega_1,\beta_1,\dots,\beta_{n-1},x_1,\dots,x_{2^n})$.
Consequently, $A$ descends to the symbol $A'=\omega \otimes \beta_1 \otimes \dots \otimes \beta_n$ in $\H_{2^m}^n(E(\omega_2,\dots,\omega_m))$ whose exponent is $2^{m-1}$ because its $2^{m-1}$th power satisfies the norm condition for being split.
Because the $2$-rank of $E(\omega_2,\dots,\omega_m)$ is at most $m+n-1+2^n$, we can use \cite[Corollary 3.4]{ChapmanMcKinnie:2020} to bound the symbol length of $A'$ in $\H_{2^{m-1}}^3(E(\omega_2,\dots,\omega_m))$ from above by $\binom{m+n-1+2^n}{n}$, and therefore the symbol length of $A$ in $\H_{2^{m-1}}^3(F)$ is at most $\binom{m+n-1+2^n}{n}$.
\end{proof}

\section{Characteristic not $p$ analogue}

Here we study the analogous result to Theorem \ref{BrauerSymbol} in the case of fields of characteristic not $p$.
Recall that when $\operatorname{char}(F) \neq p$ and $F$ contains a primitive $p^m$th root of unity $\rho$, the group $\operatorname{Br}_{p^m}(F)$ is generated by $p^m$-symbol algebras 
$$(\alpha,\beta)_{p^m,F}=F\langle x,y :x^{p^m}=\alpha, y^{p^m}=\beta, yx=\rho xy \rangle,$$
for some $\alpha,\beta \in F^\times$.

\begin{thm}[{\cite{Tignol:1983}}]\label{pspecial}
Suppose $F$ is a field of characteristic not $p$ containing a primitive $p^t$th root of unity $\rho$.
Then if $(\alpha,\beta)_{p^t,F}$ is of exponent dividing $p^m$ with $m\leq t$, its symbol length in $\operatorname{Br}_{p^m}(F)$ is at most $p^{t-m}$.
\end{thm}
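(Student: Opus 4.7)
The plan is to follow Tignol's original argument, of which this theorem is the source (Theorem \ref{BrauerSymbol} being its characteristic $p$ adaptation). First, I would convert the exponent hypothesis into a norm condition. Since $p^m [A] = [(\alpha^{p^m},\beta)_{p^t,F}]$ vanishes, applying Hilbert 90 to the cyclic Kummer extension $L := F(\sqrt[p^t]{\beta})/F$ (equivalently, the Merkurjev--Suslin presentation of $\operatorname{Br}_{p^t}(F)$ via $K_2(F)/p^t$) yields an element $u \in L^\times$ with $\norm_{L/F}(u) = \alpha^{p^m}$.

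Next, introduce the unique intermediate subfield $K := F(\sqrt[p^m]{\beta}) \subset L$, of degree $p^m$ over $F$, so that $[L:K] = p^{t-m}$ and the norm identity factors as
$$\alpha^{p^m} = \norm_{K/F}\bigl(\norm_{L/K}(u)\bigr).$$
The class $[A]$ is then to be decomposed into $p^{t-m}$ symbol algebras of degree $p^m$ by iteratively peeling off one degree-$p^m$ symbol at a time along a chain $K = K_0 \subset K_1 \subset \cdots \subset K_{t-m} = L$ of degree-$p$ subextensions. The key tool is the classical corestriction (Rosset--Tate) formula for symbol algebras, which is the characteristic not $p$ analog of the formula of \cite{MammoneMerkurjev:1991} invoked in the proof of Theorem \ref{BrauerSymbol}. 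Applied to the coordinates of $u$ along each step of the tower, it extracts a single symbol algebra $(\alpha_i,\beta_i)_{p^m,F}$ at each step, producing $p^{t-m}$ factors altogether.

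The main obstacle is the explicit bookkeeping in this inductive decomposition. While the cohomological identity $p^m[A] = 0$ combined with the norm condition guarantees the existence of such a factorization, producing the $p^{t-m}$ degree-$p^m$ symbols concretely requires a careful application of the Rosset--Tate reduction to $u$ expressed in a basis of $L$ over $K$. This is precisely what Tignol carries out in \cite{Tignol:1983}; once that decomposition is recorded in the present notation, the bound $p^{t-m}$ on the symbol length in $\operatorname{Br}_{p^m}(F)$ follows immediately.
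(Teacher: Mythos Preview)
The paper provides no proof of Theorem~\ref{pspecial} at all: it is stated with a bare citation to \cite{Tignol:1983}, and the text moves directly on to Lemma~\ref{commonslot}. (The only commentary the paper offers on Tignol's argument appears earlier, in the proof of Theorem~\ref{BrauerSymbol}, where it is remarked that the characteristic-$p$ version follows the same lines once the corestriction formula of \cite{MammoneMerkurjev:1991} is available.) So there is nothing in the paper to compare your proposal against beyond the citation itself.

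Your proposal is likewise, in the end, a deferral to \cite{Tignol:1983}: you sketch a plan but explicitly hand off the ``explicit bookkeeping'' to Tignol's paper. In that sense your write-up matches what the paper does---cite the source---only with more surrounding narrative. If you intend this as an actual proof rather than an annotated citation, be aware that the outline as written is not yet one: the passage from the norm factorization $\alpha^{p^m}=\norm_{K/F}(\norm_{L/K}(u))$ to a decomposition of $[A]$ itself (not of $p^m[A]$, which is zero) into $p^{t-m}$ degree-$p^m$ symbols is the entire content of the theorem, and invoking ``Rosset--Tate along the tower'' does not by itself produce it. Tignol's argument is quite short and uses the corestriction of a symbol from a degree-$p^{t-m}$ Kummer subextension rather than a step-by-step Rosset--Tate peeling; if you want a self-contained proof here, it would be cleaner to reproduce that corestriction computation directly.
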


\begin{lem}\label{commonslot}
Given $\alpha,\beta,\gamma \in F^\times$, if $(\alpha,\beta)_{p,F}=(\gamma,\beta)_{p,F}$, then $(\alpha,\beta)_{p^{m+1},F}^{\operatorname{op}} \otimes (\gamma,\beta)_{p^{m+1},F}$ is of symbol length at most $p$ in $\H_{p^m}^2(F)$.
\end{lem}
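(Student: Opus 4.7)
The plan is to reduce the statement to a direct application of Theorem \ref{pspecial} by rewriting the given tensor product as a single $p^{m+1}$-symbol of exponent dividing $p^m$. This is the characteristic-not-$p$ analogue of \Lref{commonslotchar2}(1), and in fact it becomes cleaner here because the bilinearity of the symbol $(-, \beta)_{p^{m+1},F}$ in its first slot is fully multiplicative (no Witt-vector bookkeeping is needed).

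First I would use the standard identity $(\alpha,\beta)_{p^{m+1},F}^{\operatorname{op}} = (\alpha^{-1},\beta)_{p^{m+1},F}$ and the bilinearity of the symbol in the first slot to write
\[
(\alpha,\beta)_{p^{m+1},F}^{\operatorname{op}} \otimes (\gamma,\beta)_{p^{m+1},F} \sim_{\operatorname{Br}} (\alpha^{-1}\gamma,\beta)_{p^{m+1},F}.
\]
Thus the tensor product is Brauer equivalent to a single $p^{m+1}$-symbol.

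Next I would verify that this symbol has exponent dividing $p^m$. The key observation is that raising $(\alpha^{-1}\gamma,\beta)_{p^{m+1},F}$ to the $p^m$-th power in the Brauer group yields the $p$-symbol $(\alpha^{-1}\gamma,\beta)_{p,F}$ (this is the usual relation between $p^t$-symbols and their powers under the $\operatorname{Exp}$-type map, which is immediate from the defining presentation with $y^p$ replacing $y^{p^{m+1}}$). By hypothesis $(\alpha,\beta)_{p,F}=(\gamma,\beta)_{p,F}$, so by bilinearity $(\alpha^{-1}\gamma,\beta)_{p,F}$ is trivial. Hence the exponent of $(\alpha^{-1}\gamma,\beta)_{p^{m+1},F}$ divides $p^m$.

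Finally I would invoke \Tref{pspecial} with $t = m+1$: a $p^{m+1}$-symbol whose exponent divides $p^m$ has symbol length at most $p^{(m+1)-m} = p$ in $\operatorname{Br}_{p^m}(F)$. This yields the claimed bound. There is no genuine obstacle here; the only point that deserves care is the verification of the exponent, and in particular the fact that the $p^m$-th power of a $p^{m+1}$-symbol $(\alpha^{-1}\gamma,\beta)_{p^{m+1},F}$ is indeed the $p$-symbol with the same slots, which is standard for cyclic algebras in the presence of a primitive $p^{m+1}$-th root of unity.
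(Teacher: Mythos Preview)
Your proof is correct and follows essentially the same approach as the paper: reduce to a single $p^{m+1}$-symbol of exponent dividing $p^m$ via bilinearity in the first slot, then apply \Tref{pspecial}. The only cosmetic difference is that the paper rewrites $\alpha^{-1}\gamma$ explicitly as a norm $\norm_{K/F}(f)$ from $K=F[\sqrt[p]{\beta}]$ before checking the exponent, whereas you verify the triviality of $(\alpha^{-1}\gamma,\beta)_{p,F}$ directly from the hypothesis; your shortcut is perfectly valid and slightly cleaner.
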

\begin{proof}
If $(\alpha,\beta)_{p,F}=(\gamma,\beta)_{p,F}$ then $\gamma=\alpha \norm_{K/F}(f)$ for some $f\in K=F[\sqrt[p]{\beta}]$. Therefore, $(\gamma,\beta)_{p^{m+1},F}=(\alpha \norm_{K/F}(f),\beta)_{p^{m+1},F} \sim_{\operatorname{Br}} (\alpha,\beta)_{p^{m+1},F} \otimes (\norm_{K/F}(f),\beta)_{p^{m+1},F}$.
Therefore, 
$$(\gamma,\beta)_{p^{m+1},F} \otimes (\alpha,\beta)_{p^{m+1},F}^{\operatorname{op}} \sim_{\operatorname{Br}} (\norm_{K/F}(f),\beta)_{p^{m+1},F}.$$ Since $(\norm_{K/F}(f),\beta)_{p^{m+1},F}^{\otimes p^m} \sim_{\operatorname{Br}} (\norm_{K/F}(f),\beta)_{p,F} \sim_{\operatorname{Br}} F$, the class of  $(\norm_{K/F}(f),\beta)_{p^{m+1},F}$ is of exponent dividing $p^m$, and so its symbol length in $\operatorname{Br}_{p^m}(F)$ is bounded from above by $p$ by Theorem \ref{pspecial}.
\end{proof}

\begin{cor}
\
\begin{itemize}
\item[a.] If $\operatorname{char}(F) \neq 2$ and $F$ contains a primitive $2^{m+1}$th root of unity, then any class in $\operatorname{Br}_{2^m}(F)$ of symbol length at most 2 in $\operatorname{Br}_{2^{m+1}}(F)$ has symbol length at most 6 in $\operatorname{Br}_{2^m}(F)$.
\item[b.] If $\operatorname{char}(F) \neq 3$ and $F$ contains a primitive $3^{m+1}$th root of unity, then any class in $\operatorname{Br}_{3^m}(F)$ of symbol length at most 2 in $\operatorname{Br}_{3^{m+1}}(F)$ has symbol length at most $15$ in $\operatorname{Br}_{3^m}(F)$.
\end{itemize}
\end{cor}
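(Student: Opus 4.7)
This is the characteristic $\neq p$ analogue of Corollary \ref{Small}, and the plan is to follow the same telescoping strategy, substituting Lemma \ref{commonslot} for Lemma \ref{commonslotchar2}. Because Lemma \ref{commonslot} always gives the bound $p$ (rather than $1$, as part (1) of Lemma \ref{commonslotchar2} provides when the first slot of the Witt vector is shared), the final numerical bounds are slightly weaker: $6$ and $15$ instead of $4$ and $11$.

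For part (a), I would start with a representative $A = (\alpha,\beta)_{2^{m+1},F} \otimes (\gamma,\delta)_{2^{m+1},F}$ of a class in $\operatorname{Br}_{2^m}(F)$. Applying the exponent map $\operatorname{Exp}_{m+1}^{m}$ to pass to $\operatorname{Br}_2(F)$ forces $(\alpha,\beta)_{2,F} \otimes (\gamma,\delta)_{2,F}$ to be split, hence $(\alpha,\beta)_{2,F} \isom (\gamma,\delta)_{2,F}$. The chain lemma for quaternion algebras in characteristic $\neq 2$ supplies an $\epsilon \in F^\times$ with
\[
(\alpha,\beta)_{2,F} = (\epsilon,\beta)_{2,F} = (\epsilon,\delta)_{2,F} = (\gamma,\delta)_{2,F}.
\]
I would then telescope $A$ through the two intermediate $2^{m+1}$-symbols $(\epsilon,\beta)_{2^{m+1},F}$ and $(\epsilon,\delta)_{2^{m+1},F}$, producing three consecutive slot-sharing tensor products of degree-$2^{m+1}$ symbol algebras. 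Lemma \ref{commonslot} bounds each of the three pieces by symbol length $2$ in $\operatorname{Br}_{2^m}(F)$, for a total of $6$.

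For part (b), the strategy is identical, with the chain lemma for symbol algebras of degree $3$ in characteristic $\neq 3$ (available since $F$ contains a primitive cube root of unity) replacing the quaternion chain lemma. Projection to $\operatorname{Br}_3(F)$ yields that $(\alpha,\beta)_{3,F}$ and $(\gamma,\delta)_{3,F}$ are Brauer inverse at the degree-$3$ level, and the chain lemma produces a chain of $5$ consecutive slot-sharing equalities mirroring the display in the proof of Corollary \ref{Small}(b), but with $-\alpha$ replaced by $\alpha^{-1}$ since in this setting $(\alpha,\beta)^{\operatorname{op}} = (\alpha^{-1},\beta)$. Telescoping the resulting chain at the $3^{m+1}$-level gives $5$ pieces, each bounded by $3$ via Lemma \ref{commonslot}, hence the bound $5 \cdot 3 = 15$.

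The main things to verify are that the chain lemmas in the required characteristics and degrees deliver chains of the lengths used ($3$ equalities for quaternion algebras, $5$ for degree-$3$ symbols), and to track opposites and inverses carefully in case (b). The usual case split present in chain-lemma arguments for degree $3$ does not affect the bound, by the same reasoning as in Corollary \ref{Small}; I expect this bookkeeping to be the only mildly delicate point of the argument.
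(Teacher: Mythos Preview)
Your proposal is correct and follows essentially the same telescoping strategy as the paper's proof: reduce to the degree-$p$ level, invoke the appropriate chain lemma, and apply Lemma~\ref{commonslot} to each adjacent pair in the chain. The only cosmetic differences are that the paper runs the quaternion chain through $(\gamma,\epsilon)$ and $(\alpha,\epsilon)$ rather than your $(\epsilon,\beta)$ and $(\epsilon,\delta)$ (either direction works, since Lemma~\ref{commonslot} applies symmetrically by swapping slots), and that the paper's citation for the degree-$3$ chain lemma (Rost) does not require the two-case split you anticipate---so the bookkeeping you flag as ``mildly delicate'' is in fact absent here.
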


\begin{proof}
Suppose $\operatorname{char}(F) \neq 2$ and that $F$ contains a primitive $2^{m+1}$th root of unity.
If $(\alpha,\beta)_{2^{m+1},F} \otimes (\gamma,\delta)_{2^{m+1},F}$ is of exponent dividing $2^m$, the algebra $(\alpha,\beta)_{2,F} \otimes (\gamma,\delta)_{2,F}$ is split, which means $(\gamma,\delta)_{2,F}=(\alpha,\beta)_{2,F}$.
By the chain lemma for quaternion algebras (\cite[Section 14, Theorem 7]{Draxl:1983}), there exist $\epsilon \in F^\times$ such that
$$(\gamma,\delta)_{2,F}=(\gamma,\epsilon)_{2,F}=(\alpha,\epsilon)_{2,F}=(\alpha,\beta)_{2,F}.$$
By Lemma \ref{commonslot}, $(\alpha,\beta)_{2^{m+1},F} \otimes (\alpha,\epsilon)_{2^{m+1},F}^{\operatorname{op}}$ is of symbol length at most 2 in $\operatorname{Br}_{2^m}(F)$, and so are $(\alpha,\epsilon)_{2^{m+1},F} \otimes (\gamma,\epsilon)_{2^{m+1},F}^{\operatorname{op}}$ and $(\gamma,\epsilon)_{2^{m+1},F} \otimes (\gamma,\delta)_{2^{m+1},F}$.
Therefore, $(\alpha,\beta)_{2^{m+1},F} \otimes (\gamma,\delta)_{2^{m+1},F}$ is of symbol length at most 6 in $\operatorname{Br}_{2^m}(F)$.

Suppose $\operatorname{char}(F)\neq 3$ and that $F$ contains a primitive $3^{m+1}$th root of unity. If $(\alpha,\beta)_{3^{m+1},F} \otimes (\gamma,\delta)_{3^{m+1},F}$ is of exponent dividing $3^m$, the algebra $(\alpha,\beta)_{3,F} \otimes (\gamma,\delta)_{3,F}$ is split, which means $(\gamma,\delta)_{3,F}=(\alpha^{-1},\beta)_{3,F}$.
By the chain lemma for symbol algebras of degree 3 (see \cite{Rost:1999}), there exist $a,b,c \in F^\times$ such that
$$(\gamma,\delta)_{3,F}=(\gamma,a)_{3,F}=(b,a)_{3,F}=(b,c)_{3,F}=(\alpha^{-1},c)_{3,F}=(\alpha^{-1},\beta)_{3,F}.$$
Now, each of the algebras $(\gamma,\delta)_{3^{m+1},F}\otimes (\gamma,a)_{3^{m+1},F}^{\operatorname{op}}$, $(\gamma,a)_{3^{m+1},F}\otimes (b,a)_{3^{m+1},F}^{\operatorname{op}}$, $(b,a)_{3^{m+1},F} \otimes (b,c)_{3^{m+1},F}^{\operatorname{op}}$,$(b,c)_{3^{m+1},F}\otimes (\alpha^{-1},c)_{3^{m+1},F}^{\operatorname{op}}$ and $(\alpha^{-1},c)_{3^{m+1},F} \otimes (\alpha^{-1},\beta)_{3^{m+1},F}^{\operatorname{op}}$ is of symbol length at most 3 in $\operatorname{Br}_{3^m}(F)$ , and so $(\alpha,\beta)_{3^{m+1},F} \otimes (\gamma,\delta)_{3^{m+1},F}$ is of symbol length at most $15$.
\end{proof}

\section{Where norm conditions fail}\label{section:norm}

We conclude the paper with an example that demonstrates how norm conditions for splitness do not extend to single symbols in $\H_{p^2}^{3}(F)$.
To be more precise, assuming $\omega \otimes \beta \in \H_{p^2}^2(F)$ is of exponent $p$, the symbol $\omega \otimes \beta \otimes \gamma \in \H_{p^2}^{3}(F)$ is split if $\gamma = a\cdot b^p$ where $a$ is a reduced norm in the cyclic algebra $[\omega,\beta)_{p^2,F}$ and $b \in F^\times$. The converse, however, is not necessarily true as we shall now see, i.e., we shall construct an example where $\gamma$ is not in $\operatorname{Nrd}([\omega,\beta)_{p^2,F}) \cdot (F^\times)^p$ and still $\omega \otimes \beta \otimes \gamma$ is trivial in $H_{p^2}^3(F)$.
We do this by following the analogous example in \cite{Merkurjev:1995}, which deals with the case of $\operatorname{char}(F) \neq p$.
We focus on the differences, and leave the statements and arguments that do not change with the characteristic.

Let $k$ be a field of $\operatorname{char}(k)=p$ for some odd prime $p$, and consider $E=k(x,y)$ the function field in two algebraically independent variables over $k$.
Let $K=E(s,t)$ be the function field in two algebraically independent variables over $E$.
Let $C=[x,s)_{p,K}$, $D=[y,t)_{p,K}$, and $T=C \otimes D$.
Write $L=E[\wp^{-1}(x),\wp^{-1}(y)]$.
Write $X$ for the Severi-Brauer variety of $[y,x)_{p,E}$.
For any field $E' \supseteq E$ for which $[y,x)_{p,E'}$ is a division algebra, we denote by $E'(X)$ the function field over $E'$ of $X$. Note that $T$ is represented by a single symbol $\omega \otimes \beta$ in $\H_{p^2}^2(K)$, and so $T\otimes x=\omega \otimes \beta \otimes x$ is a single symbol in $\H_{p^2}^3(K)$.

\begin{claim}
The algebra $T \otimes K(X)$ is a division algebra.
\end{claim}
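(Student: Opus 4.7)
The strategy is to follow Merkurjev's original argument in \cite{Merkurjev:1995}, adapted to characteristic $p$. The aim is to prove $\operatorname{ind}(T \otimes K(X)) = p^2$, which forces $T \otimes K(X)$ to be division since $\deg T = p^2$.

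The essential preparation is an independence lemma: the classes $x$ and $y$ are $\mathbb{F}_p$-linearly independent in $H^1_p(E(X)) = E(X)/\wp(E(X))$. Since $X$ is the Severi--Brauer variety of $[y,x)_{p,E}$, it is a smooth projective geometrically integral $E$-variety, so $E(X)/E$ is a regular extension and $\wp(E(X)) \cap E = \wp(E)$. Over $E = k(x,y)$ the elements $x,y$ are plainly $\mathbb{F}_p$-independent modulo $\wp(E)$, and independence transfers to $E(X)$.

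Set $F := E(X)$, so that $K(X) = F(s,t)$. Because $F(s,t)/F$ is purely transcendental and $x \notin \wp(F)$, we have $x \notin \wp(K(X))$, and hence $C \otimes K(X) = [x,s)_{p,K(X)}$ is a division algebra of degree $p$. Let $L := K(X)(\theta)$ with $\theta^p - \theta = x$, a degree-$p$ splitting subfield of $C$. Over $L$ we get $T \otimes L \sim D \otimes L = [y,t)_{p,L}$. Artin--Schreier theory gives $\wp(F(\theta)) \cap F = \wp(F) + \mathbb{F}_p x$, so the independence yields $y \notin \wp(F(\theta))$, and hence $y \notin \wp(L)$ by the same purity argument. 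Therefore $D \otimes L$ is division of degree $p$, and the standard index-reduction formula along $L/K(X)$ shows $\operatorname{ind}(T \otimes K(X)) \in \{p, p^2\}$.

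To rule out $\operatorname{ind}(T \otimes K(X)) = p$, suppose for contradiction $T \sim [A,B)_p$ in $\operatorname{Br}(K(X))$ for some $A \in K(X)$, $B \in K(X)^{\times}$. I would pass to the two-dimensional complete local field $\widehat{K(X)} = F(\!(s)\!)(\!(t)\!)$ and compute residues. The $v_s$-residue of $T$ is $x \in H^1_p(F(\!(t)\!))$ (the contribution from $[x,s)$, with $[y,t)$ being $v_s$-unramified), and is nonzero by purity of Laurent series and the independence; symmetrically the $v_t$-residue is $y$. Writing $B = s^\alpha t^\beta u$ with $u$ a unit at both $v_s$ and $v_t$ and reducing $A$ modulo $\wp$ to be regular at the corner $(s,t) = (0,0)$, the $v_s$-residue of $[A,B)_p$ is $\alpha A(0,t) \pmod{\wp(F(\!(t)\!))}$ and the $v_t$-residue is $\beta A(s,0) \pmod{\wp(F(\!(s)\!))}$. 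Matching these to $x$ and $y$ respectively and further specializing to $(s,t) = (0,0)$ yields the relation $\beta x \equiv \alpha y \pmod{\wp(F)}$. By the independence lemma this forces $\alpha \equiv \beta \equiv 0 \pmod p$, making both residues vanish and contradicting their nontriviality.

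The main technical obstacle is the careful handling of the corner residue computation when $A$ has poles along $\{s=0\}$ or $\{t=0\}$. The decomposition of $H^1_p$ at the two-dimensional local field $F(\!(s)\!)(\!(t)\!)$ together with the Artin--Schreier--Witt calculus must be invoked to justify the reduction to the regular case at the corner, which is the heart of Merkurjev's original argument transplanted to characteristic $p$.
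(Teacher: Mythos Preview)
The paper's proof is two lines and takes a completely different route. Since $E(X)/E$ is regular, $L(X)=L\otimes_E E(X)$ is a field of degree $p^2$ over $E(X)$; hence $T\otimes K(X)=[x,s)_p\otimes[y,t)_p$ over $E(X)(s,t)$ is a \emph{generic abelian crossed product} with maximal Galois subfield $L(X)(s,t)$ and independent transcendental parameters $s,t$, and such algebras are division by the classical theory (Amitsur, or the rank-$2$ valuation argument as in \cite{TignolWadsworth:2015}). Your independence lemma in Step~1 is precisely the statement $[L(X):E(X)]=p^2$, so once you have proved it you already possess everything the paper uses; Steps~2--5 are an unnecessary detour.

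Step~5 moreover carries a genuine gap that is specific to characteristic $p$ and does not occur in Merkurjev's original argument. In characteristic $p$ there is no clean residue homomorphism ${}_p\!\operatorname{Br}(K(X))\to \H^1_p(\kappa(v))$; Kato's theory yields instead a filtration by Swan conductor, and wildly ramified classes are not detected by an $\H^1_p$-valued residue. Your key reduction, modifying $A$ modulo $\wp(K(X))$ so that it is regular at the corner, is false in general: already $A=1/s$ cannot be made regular along $s=0$, because for any $\xi\in F(s,t)$ with a pole of order $N\geq 1$ at $s=0$ the element $\xi^p-\xi$ has pole order $pN$, so no element of $\wp(F(s,t))$ cancels a simple pole. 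It is true that $T$ itself is tamely ramified along $\{s=0\}$ and $\{t=0\}$, but deducing a contradiction from an \emph{arbitrary} symbol presentation $[A,B)_p$ would require the full Kato filtration machinery (refined Swan conductors, structure of the graded pieces of $\H^2_p$), which is far more than you have sketched and far more than the problem needs. The generic crossed product observation bypasses all of this.
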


\begin{proof}
Since $E(X)/E$ is a regular extension, $L(X)=L \otimes_E E(X)$ is a field.
Now, $T \otimes K(X)$ is a generic crossed product with $L(X)\otimes_E K$ as the maximal Galois subfield, and therefore $T \otimes K(X)$ is a division algebra.
\end{proof}

Set $F=K(X)$.
For simplicity, we denote $T_F$ for $T \otimes F$.
This algebra represents a class in $\H_p^2(F)$.
Now, $T_{F} \otimes x$ is clearly trivial in $\H_{p^2}^3(F)$, 
so this is the example we want to focus on,
i.e., we want to explain why $b$ is not in $\operatorname{Nrd}(T_F) \cdot (F^\times)^p$, despite $T_F \otimes x$ being split.

\begin{claim}
The intersection of the group $\operatorname{Nrd}(T_{K(X)}) \cdot (K(X)^\times)^p$ with $E(X)^\times$ is $\operatorname{Norm}(L(X)/E(X)) \cdot (E(X)^\times)^p$.
\end{claim}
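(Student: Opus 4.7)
The plan is to establish each inclusion separately, paralleling the strategy of the analogous claim in \cite{Merkurjev:1995} with characteristic-$p$ modifications.

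For the inclusion $\operatorname{Norm}(L(X)/E(X)) \cdot (E(X)^\times)^p \subseteq \operatorname{Nrd}(T_{K(X)}) \cdot (K(X)^\times)^p \cap E(X)^\times$, observe first that $L/E$ is Galois of degree $p^2 = \deg T$ and $K/E$ is purely transcendental, so $L \otimes_E K = L(s,t)$ is a field which embeds as a maximal Galois subfield of $T$; this is exactly the realization of $T$ as the generic crossed product $(L(s,t)/K, \operatorname{Gal}(L/E), c)$ with cocycle values $s$ and $t$. After base changing to $F = K(X)$, the maximal subfield becomes $L(X)(s,t) \hookrightarrow T_F$, and for any $\ell \in L(X)^\times$ the reduced norm computed through this subfield is
$$\operatorname{Nrd}_{T_F/F}(\ell) = \operatorname{Norm}_{L(X)(s,t)/F}(\ell) = \operatorname{Norm}_{L(X)/E(X)}(\ell),$$
the last equality holding since $\ell$ is free of $s, t$. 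Combined with the tautological inclusion $(E(X)^\times)^p \subseteq (F^\times)^p$, this gives the easy direction.

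For the reverse inclusion, suppose $u \in E(X)^\times$ with $u = \operatorname{Nrd}(z)w^p$ for some $z \in T_F^\times$ and $w \in F^\times$. The plan is to analyze this equation with respect to the $s$-adic and $t$-adic valuations $v_s$ and $v_t$ on $F = E(X)(s,t)$. Under $v_s$, the algebra $[x,s)_{p,F}$ is ramified with Artin-Schreier residue determined by $x$, while $[y,t)_{p,F}$ is unramified; the roles of $s$ and $t$ are symmetric under $v_t$. Since $u \in E(X)^\times$ is a unit at both valuations, working inside a Galois-stable maximal order of $T_F$ at each valuation and applying Kato's residue maps on $\H^{n}_{p}$, one can modify $z$ by elements of trivial reduced norm modulo $p$-th powers until $z$ lies, up to a $p$-th power factor, in the maximal subfield $L(X)(s,t)$. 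Then $\operatorname{Nrd}(z) = \operatorname{Norm}_{L(X)(s,t)/F}(z)$, and a final specialization at $s, t$ (or a residue comparison) identifies $u$ as an element of $\operatorname{Norm}(L(X)/E(X)) \cdot (E(X)^\times)^p$.

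The main obstacle is precisely the successive reduction of $z$ into the maximal subfield $L(X)(s,t)$. This is where the characteristic-$p$ proof diverges from Merkurjev's original argument: Kummer-theoretic tame symbols must be replaced by Kato's residues on Kato-Milne cohomology $\H^n_p$, and the cyclic residue extensions appearing are Artin-Schreier rather than Kummer. Once this structural reduction is established, the remainder of the argument is formal and follows \cite{Merkurjev:1995} verbatim, consistent with the remark above that only the differences from Merkurjev's proof need to be highlighted.
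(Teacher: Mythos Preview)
Your easy inclusion is fine and matches the paper's implicit treatment. The hard inclusion, however, has a genuine gap: you yourself flag ``the successive reduction of $z$ into the maximal subfield $L(X)(s,t)$'' as the main obstacle, and you never actually carry it out. Invoking Kato's residue maps on $\H_p^n$ is a red herring here, since the claim concerns reduced norms of \emph{elements} of $T_F$, not cohomology classes; those residue maps do not by themselves furnish a modification of $z$ by elements of trivial reduced norm, nor do they force $z$ into a prescribed maximal subfield. Even granting that step, the passage from $\operatorname{Norm}_{L(X)(s,t)/F}(z)\in E(X)(s,t)^\times$ down to $\operatorname{Norm}(L(X)/E(X))\cdot(E(X)^\times)^p$ by ``specialization at $s,t$'' is left unjustified.

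The paper sidesteps all of this with a one-step valuation-theoretic argument on the division algebra $T_{K(X)}$ itself. One equips $K(X)=E(X)(s,t)$ with the right-to-left $(s,t)$-adic rank-$2$ valuation; then $T_{K(X)}$ is defectless with value group $\tfrac{1}{p}\mathbb{Z}\times\tfrac{1}{p}\mathbb{Z}$ and residue division algebra the (commutative) field $L(X)$ (\cite[Proposition~3.38]{TignolWadsworth:2015}). A standard fact for valued division algebras (\cite[Lemma~11.16]{TignolWadsworth:2015}) then gives that the residue of $\operatorname{Nrd}(z)$ lies in $\operatorname{Norm}(L(X)/E(X))$ for every $z\in T_{K(X)}^\times$. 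Hence if $\alpha\in E(X)^\times$ satisfies $\alpha=\operatorname{Nrd}(z)\cdot w^p$, passing to residues immediately yields $\alpha=\overline{a}\cdot\overline{b}^p$ with $\overline{a}\in\operatorname{Norm}(L(X)/E(X))$ and $\overline{b}\in E(X)^\times$. No reduction of $z$ to a maximal subfield, no separate handling of $v_s$ and $v_t$, and no cohomological residues are needed.
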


\begin{proof}
We start by noting that $K(X)=E(X)(s,t)$.
Consider the right-to-left $(s,t)$-adic valuation (see \cite{TignolWadsworth:2015} for reference).
The algebra $T_{K(X)}$ is defectless with value group $\frac{1}{p} \mathbb{Z} \times \frac{1}{p} \mathbb{Z}$ and residue field $L(X)$ (see \cite[Proposition 3.38]{TignolWadsworth:2015}).
If a given $\alpha \in F^\times$ is in $\operatorname{Nrd}(T_{K(X)}) \cdot (K(X)^\times)^p$, then the equation $\alpha=a \cdot b^p$ has a solution for $a \in \operatorname{Nrd}(T_{K(X)})$ and $b \in K(X)^\times$, but then there is also a solution over the residues $\alpha=\overline{a} \cdot \overline{b}^p$ where now $\overline{a} \in \operatorname{Norm}(L(X)/E(X))$ and $\overline{b} \in E(X)^\times$ (see \cite[Lemma 11.16]{TignolWadsworth:2015}).
\end{proof}

\begin{claim}
The intersection of the $\operatorname{Norm}(L(X)/E(X)) \cdot (E(X)^\times)^p$ with $E^\times$ is $\operatorname{Norm}(L/E) \cdot (E^\times)^p$.
\end{claim}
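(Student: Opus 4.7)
The inclusion $\operatorname{Norm}(L/E)\cdot(E^\times)^p \subseteq \bigl(\operatorname{Norm}(L(X)/E(X))\cdot(E(X)^\times)^p\bigr)\cap E^\times$ is immediate by base change. For the reverse, I would follow the blueprint of the preceding claim, replacing the $(s,t)$-adic valuation by a divisorial valuation on $E(X)/E$. The essential input is that $L$ already splits $[y,x)_{p,E}$, since $E[\wp^{-1}(y)]\subset L$ is a maximal subfield of this algebra; hence $X_L \cong \mathbb{P}^{p-1}_L$ and $L(X)/L$ is purely transcendental of degree $p-1$. In particular $L(X)$ is a regular extension of $L$, and the natural maps $L^\times/(L^\times)^p \to L(X)^\times/(L(X)^\times)^p$ and $E^\times/(E^\times)^p \to E(X)^\times/(E(X)^\times)^p$ are both injective.

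Given an identity $\alpha = \operatorname{Norm}_{L(X)/E(X)}(h)\cdot g^p$ with $\alpha\in E^\times$, $h\in L(X)^\times$, $g\in E(X)^\times$, the argument has three steps. First, choose a prime divisor $D\subset X$ with residue field $E(D)$ linearly disjoint from $L$ over $E$; such divisors exist in abundance since $L/E$ is a finite Galois extension and $X$ has infinitely many prime divisors of each sufficiently large degree. For such a $D$, the unique lift $\tilde D\subset X_L$ has residue field $L(\tilde D) = L\otimes_E E(D)$, which is Galois over $E(D)$ with group $G = \operatorname{Gal}(L/E)$. Second, after modifying $h$ by a $p$-th power in $L(X)^\times$ and $g$ by a $p$-th power in $E(X)^\times$, arrange that $h$ is a unit at the valuation associated to $\tilde D$ and $g$ is a unit at the valuation associated to $D$. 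Third, take residues: the identity becomes $\alpha = \operatorname{Norm}_{L(\tilde D)/E(D)}(\bar h)\cdot \bar g^p$ in $E(D)^\times$, where $\bar h \in L(\tilde D)^\times$ and $\bar g \in E(D)^\times$. To finish, one would combine this identity with a further specialization (or a second, transverse divisor) to descend from $E(D)$ back down to $E$.

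The main obstacle is this final step: a single divisorial residue places the identity in $E(D)\supsetneq E$, and naive iteration of residues multiplies rather than contracts the residue extension. The key observation, which should serve as the characteristic-$p$ analogue of the Brauer-group computation in the Merkurjev paper cited above, is that $\operatorname{Pic}(X) = \mathbb{Z}$ is generated by a degree-$p$ class (since $[y,x)_{p,E}$ has Brauer index $p$). Consequently the divisor classes that obstruct direct descent are $p$-divisible, and their contribution is absorbed into the $(E^\times)^p$ factor on the right-hand side of the equality to be proved. The substantive technical content is to check that this absorption is compatible with the Artin--Schreier $G$-action on $L/E$; I expect this to go through by the same formal manipulations as in the cited characteristic-not-$p$ source, with valuation-theoretic lemmas on defectless algebras (as invoked in the preceding claim) providing the bridge between the residue identity over $E(D)$ and the desired identity over $E$.
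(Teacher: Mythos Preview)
The paper's own proof is a one-line citation: it asserts that the argument of \cite[Proposition~2.6]{Merkurjev:1995} is characteristic-free and applies verbatim. Your proposal is not that argument, and it has a genuine gap.

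The gap is exactly where you flag it. A single divisorial residue lands the identity in $E(D)$, and since $X$ has no $E$-rational points (the algebra $[y,x)_{p,E}$ being division), every closed point has degree divisible by $p$ and you can never choose $D$ with $E(D)=E$; iterating residues only moves you to yet larger residue fields. Your proposed resolution invokes the $p$-divisibility of $\operatorname{Pic}(X)$ and then appeals to ``valuation-theoretic lemmas on defectless algebras,'' but there is no valued division algebra in this claim---only the Galois extension $L(X)/E(X)$---so the machinery of the preceding claim does not transfer. The sentence ``I expect this to go through by the same formal manipulations'' is precisely where a proof is required and none is supplied.

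Merkurjev's argument works globally rather than one divisor at a time. The structural input is the pair of exact sequences
\[
1\to L^\times\to L(X)^\times\to\operatorname{Div}(X_L)\to\operatorname{Pic}(X_L)\to 0
\]
(with $X_L\cong\mathbb{P}^{p-1}_L$, so $\operatorname{Div}(X_L)$ is a permutation $G$-module and $\operatorname{Pic}(X_L)\cong\mathbb{Z}$) and its analogue over $E$, together with the fact that the image of $\operatorname{Pic}(X)\hookrightarrow\operatorname{Pic}(X_L)^G=\mathbb{Z}$ has index $p$. Comparing these two sequences produces the $(E^\times)^p$ correction directly over $E$, without ever passing to a residue field $E(D)$. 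Your observation about $\operatorname{Pic}(X)$ is the right ingredient, but it has to be fed into this global comparison, not into a local residue computation.
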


\begin{proof}
Follows the lines of \cite[Proposition 2.6]{Merkurjev:1995} without any change, for the original proof is characteristic free.
\end{proof}

As a conclusion, we can say that $x$ is not in $\operatorname{Nrd}(T_{K(X)}) \cdot (K(X)^\times)^p$, because it belongs to the intersection of the latter with $E^\times$, which is $\operatorname{Norm}(L/E) \cdot (E^\times)^p$.
Now, $x$ is not in $\operatorname{Norm}(L/E) \cdot (E^\times)^p$, because if it were, then $T \otimes x$ would be trivial in $\H_p^3(K)$, but it is not because the class $T \otimes x$ is the same as $[y,t)_{p,K} \otimes x=[y,x)_{p,K}^{\operatorname{op}} \otimes t$ and $[y,x)_{p,E}$ is a division algebra and $K=E(s,t)$ is a rational function field.

\section*{Acknowledgements}

The authors thank Eliyahu Matzri for bringing to our attention important works in the literature, which improved the quality of the paper.
The first author acknowledges the receipt of the Chateaubriand Fellowship (969845L) offered by the French Embassy in Israel in the fall of 2020, which helped establish the scientific connection with the second author. The third author was supported by a grant from the Simons Foundation (580782).

\section*{Statements}
Data Statement - Data sharing not applicable to this article as no datasets were generated or analysed during the current study.

Conflict of Interest Statement - On behalf of all authors, the corresponding author states that there is no conflict of interest.

\bibliographystyle{alpha}
\def\cprime{$'$}

\end{document}